\documentclass[12pt,twoside]{amsart}
\usepackage{amssymb,amsmath,amsfonts,amsthm}
\usepackage{mathrsfs}
\usepackage[X2,T1]{fontenc}
\usepackage[applemac]{inputenc}
\usepackage[mathscr]{euscript}
\usepackage{cite}
\usepackage{latexsym}
\usepackage{verbatim}
\usepackage{cancel}
\usepackage[dvipsnames]{xcolor}
\setlength{\oddsidemargin}{-5mm}
\setlength{\evensidemargin}{-5mm}
\setlength{\textwidth}{170mm}
\setlength{\textheight}{240mm}
\setlength{\topmargin}{-2mm}
\def\Im{\mathop{\rm Im}\nolimits}

\newcommand{\japx}{\langle x\rangle}
\newcommand{\japxi}{\langle\xi\rangle}

\newcommand{\partialx}{\partial_x}
\newcommand{\partialxi}{\partial_\xi}
\newcommand{\doublepartial}{\partial_\xi^\alpha\partial_x^\beta}

\def\Im{\mathop{\rm Im}\nolimits}

\def\R{\mathbb R}

\def\N{\mathbb N}

\newcommand\dslash{d\llap {\raisebox{.9ex}{$\scriptstyle-\!$}}}
\usepackage{colortbl}

\newcommand{\beqsn}{\arraycolsep1.5pt\begin{eqnarray*}}
\newcommand{\eeqsn}{\end{eqnarray*}\arraycolsep5pt}
\newcommand{\beqs}{\arraycolsep1.5pt\begin{eqnarray}}
\newcommand{\eeqs}{\end{eqnarray}\arraycolsep5pt}

\newtheorem{theorem}{Theorem}
\newtheorem{lemma}{Lemma}

\newtheorem{proposition}{Proposition}
\newtheorem{definition}{Definition}

\newtheorem{remark}{Remark}


\catcode`\@=11

\renewcommand{\section}%
   {\setcounter{equation}{0}\@startsection {section}{1}{\z@}{-3.5ex plus -1ex
  minus -.2ex}{2.3ex plus .2ex}{\Large\bf}}

\title[Cauchy problem for $p$-evolutions equations with data in Gelfand-Shilov spaces]{The Cauchy problem for $p$-evolution equations with initial data in Gelfand-Shilov spaces}

\author[M. Cappiello]{Marco Cappiello $^1$}
\address{Dipartimento di Matematica ``G. Peano'' \\Universit\`a di Torino\\
Via Carlo Alberto 10\\
10123 Torino\\
Italy}
\email{marco.cappiello@unito.it}

\author[E.C. Machado]{Eliakim Cleyton Machado}
\address{Department of Mathematics \\ Federal University of Paran\'a, Caixa Postal 19081 \\ CEP 81531-980, Curitiba, Brazil}
\email{eliakimmachado@gmail.com}

\thanks{The first author has been supported by the Italian Ministry of the University and Research - MUR, within the PRIN 2022 Call (Project Code 2022HCLAZ8, CUP D53C24003370006). The second author wishes to thank for the financial support granted by Conselho de Desenvolvimento Cientifico e Tecnol\'ogico (CNPq), Brazil, 200229/2023-0, during his sandwich Ph.D. period in Turin, Italy, when part of this paper has been written.}

\begin{document}

\def\thefootnote{}
\footnote{$^1$ Corresponding author} 

\begin{abstract} We study the Cauchy problem for a class of linear evolution equations of arbitrary order with coefficients depending both on time and space variables. Under suitable decay assumptions on the coefficients of the lower order terms for $|x|$ large, we prove a well-posedness result in Gelfand-Shilov spaces.
\end{abstract}

\maketitle

\noindent  \textit{2020 Mathematics Subject Classification}: 35G10, 35S05, 35B65, 46F05 \\

\noindent
\textit{Keywords and phrases}: $p$-evolution equations, Gelfand-Shilov spaces, Cauchy problem, well-posedness.

\section{Introduction}
The aim of this paper is to study the Cauchy problem with initial data in Gelfand-Shilov spaces for a class of evolution operators of the form \begin{equation}\label{differential_p_evolution_operator}
	P(t,x,D_t,D_x) = D_t + a_p(t)D_x^p + \sum_{j=1}^p a_{p-j}(t,x)D_x^{p-j}, \quad (t,x) \in [0,T] \times \R,
\end{equation}
where $a_p \in C([0,T]; \R)$ and $a_{p-j} \in C([0,T]; \mathcal{B}^\infty(\R)), j=1,\ldots,p.$ Here $D=-i\partial$ and $\mathcal{B}^\infty(\R)$ denotes the space of all smooth complex-valued functions which are uniformly bounded on $\R$ with all their derivatives. Operators of the form \eqref{differential_p_evolution_operator} belong to the class of $p$-evolution operators introduced by Mizohata, cf. \cite{Mizo}. The assumption that $a_p$ is real-valued implies that the principal symbol in the sense of Petrowski admits the real root $\tau =-a_p(t)\xi^p$. Several operators of physical interest are included in the class above, in primis Schr\"odinger operators with lower order terms in the case $p=2$. For $p\geq 3$, operators of the form \eqref{differential_p_evolution_operator} include linearizations of KdV-type operators with variable coefficients ($p=3$) and other models appearing in the theory of dispersive equations like Kawahara-type operators ($p=5$), cf. \cite{Kawahara, Marchenko}.  
The well-posedness of the Cauchy problem \begin{equation}\label{cauchy_problem_gelfand-shilov}
	\left\lbrace \begin{array}{l}
		P(t,x,D_t,D_x) u(t,x) = f(t,x), \quad (t,x) \in [0,T] \times \R \\ 
		u(0,x) = g(x), \quad x \in \R
	\end{array}  \right. 
\end{equation}
has been studied in many papers, in particular, when the data $f,g$ belong to $L^2(\R)$, Sobolev spaces $H^s(\R)$ or Gevrey classes, see \cite{AAC3evolGevrey, AACpevolGevreynec, AACM25, ABZsuff, ABZnec, ACR, CC, CRJEECT, Ichinose1, Ichinose2, KB}. 
The most challenging case is when some of the coefficients $a_{p-j},j=1,\ldots,p$, of the lower order terms are smooth complex-valued functions.
In this case, the well-posedness of the Cauchy problem \eqref{cauchy_problem_gelfand-shilov}
can be obtained imposing some control on the behaviour of the coefficients for $|x|$ large. Concerning $H^\infty$-well-posedness, sufficient conditions for well-posedness have been proved in \cite {ABZsuff} whereas necessary conditions have been stated in \cite{ABZnec}. Analogous results have been obtained in Gevrey spaces  by the first author et al. in the recent papers \cite{AACpevolGevreynec} and \cite{AACM25}. 

Starting from the latter results, the aim of this paper is to investigate the Cauchy problem \eqref{cauchy_problem_gelfand-shilov} for the operator \eqref{differential_p_evolution_operator} in the case when the initial data belong to the Gelfand-Shilov space $\mathcal{S}_s^\theta(\R)$ for some $s>1, \theta >1.$ This space is
defined as the space of all functions $f \in C^\infty(\R)$ such that
\begin{equation}\label{GSdef}\sup_{\alpha \in \N_0}\sup_{x \in \R} C^{-\alpha}(\alpha!)^{-\theta}\exp(c|x|^{1/s}) |f^{(\alpha)}(x)|<\infty
	\end{equation}
for some positive constants $C,c$. As it is clear from \eqref{GSdef}, the elements of $\mathcal{S}_s^\theta(\R)$ are Gevrey regular functions which decay exponentially at infinity. Due to their relation with Gevrey classes and to their good properties concerning the action of Fourier transform $\mathscr{F}$, these spaces have been largely employed in the last twenty years to treat partial differential equations via Fourier and microlocal analysis, see \cite{Arias_GS, AAC3evolGelfand-Shilov, AW24, ACJMPA, scncpp2, Cappiello, CappielloRodino, CGR1, CGR2, CGR3}. Concerning in particular $p$-evolution equations, we mention the recent papers by the first author et al. \cite{AAC3evolGelfand-Shilov, ACJMPA} where existence results have been obtained in Gelfand-Shilov spaces for the cases $p=2$ and $p=3$. Recently, in \cite{Arias_GS} the author obtained a well-posedness result for Schr\"odinger-type operators of the form
$$P(t,x,D_t,D_x) = D_t - \Delta_x + \sum_{j=1}^n a_j(t,x)D_{x_j} + a_0(t,x), \qquad (t,x) \in [0,T]\times \R^n.$$ Assuming $a_j$ Gevrey regular of order $\theta_0$ for some $\theta_0 >1$ and $|\Im a_j(t,x)| \leq C_j \japx^{-\sigma}$ for some $\sigma \in (0, 1)$ he proved that the related Cauchy problem is well-posed  in $S^\theta_s(\R^n)$,  for $\theta \in [\theta_0, \min\{1/(1-\sigma),s\}]$ and also proved the sharpness of this condition. The aim of the present paper is to obtain similar results for operators of the form \eqref{differential_p_evolution_operator}.  
\\
\indent In order to state our main results, we need to introduce a scale of weighted Sobolev-type spaces related to Gelfand-Shilov spaces. Fixed $\rho=(\rho_1,\rho_2),m=(m_1,m_2) \in \R^2, s >1, \theta >1$ we define 
\begin{equation}\label{wGSobolev}
H_{\rho;s,\theta}^{m}(\R) = \{ u \in L^2(\R) \ : \ \japx^{m_2} \langle D \rangle^{m_1} e^{\rho_2 \japx^{1/s}} e^{\rho_1 \langle D \rangle^{1/\theta}}u \in L^2(\R) \}.
\end{equation}
It is easy to verify that 
$$\mathcal{S}_s^\theta(\R)= \bigcup_{\stackrel{\rho \in \R^2}{ \rho_1>0, \rho_2 >0}}H_{\rho;s,\theta}^{m}(\R)$$
for every $m \in \R^2$.

\begin{definition}\label{definition_gelfand-shilov-sobolev_well-posedness}\textup{
We say that the Cauchy problem \eqref{cauchy_problem_gelfand-shilov} is \textit{well-posed} in $\mathcal{S}^\theta_s(\R)$ if for any given $m=(m_1,m_2),\rho=(\rho_1,\rho_2) \in \R^2$, with $\rho_j>0,j=1,2$, there exist $\tilde{\rho}=(\tilde{\rho}_1,\tilde{\rho}_2) \in \R^2$ with $\tilde \rho_j>0, j=1,2,$ and a constant $C:=C(\rho,T)>0$ such that, for all $f \in C \left( [0,T];H_{\rho;s,\theta}^{m}(\R) \right)$ and $g \in H_{\rho;s,\theta}^{m}(\R)$, there exists a unique solution $u \in C^1 \left( [0,T];H_{\tilde{\rho};s,\theta}^{m}(\R) \right)$ of \eqref{cauchy_problem_gelfand-shilov} and the following energy estimate holds
$$
\| u(t,\cdot) \|_{H_{\tilde{\rho};s,\theta}^{m}} \leq C \left( \| g \|_{H_{\rho;s,\theta}^{m}}^2 + \int_0^t \| f(\tau,\cdot) \|_{H_{\rho;s,\theta}^{m}}^2 d\tau \right).
$$
}\end{definition}

It is easy to verify, eventually conjugating our operator by $\japx^{m_{2}} \langle D \rangle^{m_{1}}$, that we can replace $m \in \R^2$ by $(0,0)$ in Definition \ref{definition_gelfand-shilov-sobolev_well-posedness}; hence, from now on, we shall limit to consider the spaces \eqref{wGSobolev} for $m=(0,0)$, and denote them simply by $H^0_{\rho;s,\theta}(\R)$. The main result that will be proved in this paper, is the following

\begin{theorem}\label{theorem_main_result_3}
Let $\theta_0>1$ and $\sigma \in \left( \frac{p-2}{p-1},1 \right)$ such that $\theta_0 < \frac{1}{(p-1)(1-\sigma)}$. Let $P$ be an operator of the type \eqref{differential_p_evolution_operator} whose coefficients satisfy the following assumptions:
\begin{itemize}
\item[\textup{(i)}] $a_p \in C \left( [0,T];\R \right)$ and there exists $C_{a_p}>0$ such that $|a_p(t)| \geq C_{a_p}$, for all $t \in [0,T]$.
\item[\textup{(ii)}] $| \partialx^\beta a_{p-j}(t,x) | \leq C_{a_{p-j}}^{\beta + 1} \beta!^{\theta_0} \japx^{-\frac{p-j}{p-1}\sigma - \beta}$, for some $C_{a_{p-j}}>0$, $j=1,...,p-1$ and for all $\beta \in \mathbb{N}_0$, $(t,x) \in [0,T] \times \R$.
\end{itemize}
Let $s,\theta>1$ such that $(p-1)\theta < \min \left\lbrace \frac{1}{1-\sigma}, s \right\rbrace$ and $\theta \geq \theta_0$, and let $f \in C \left( [0,T];H_{\rho;s,\theta}^0(\R) \right)$ and $g \in H_{\rho;s,\theta}^0(\R)$, for some $\rho=(\rho_1,\rho_2) \in \R^2$ with $\rho_1,\rho_2 > 0$. Then there exists a unique solution $u \in C^1 \left( [0,T];H_{(\tilde{\rho}_1,\delta);s,\theta}^0(\R) \right)$ of \eqref{cauchy_problem_gelfand-shilov} for some $\tilde{\rho}_1 \in (0,\rho_1)$, $\delta \in (0,\rho_2)$ and it satisfies the energy estimate
\begin{equation}\label{energy_estimate_cauchy_gelfand-shilov}
\| u(t) \|_{H_{(\tilde{\rho}_1,\delta);s,\theta}^0}^2 \leq C \left( \| g \|_{H_{\rho;s,\theta}^0}^2 + \int_0^t \| f(\tau) \|_{H_{\rho;s,\theta}^0}^2 d\tau \right),
\end{equation}
for all $t \in [0,T]$ and for some constant $C>0$. In particular, the Cauchy problem \eqref{cauchy_problem_gelfand-shilov} is well-posed in $\mathcal{S}_s^\theta(\R)$.
\end{theorem}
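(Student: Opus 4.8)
The plan is to reduce the Cauchy problem to an equivalent one on $L^2(\R)$ by conjugating the operator $P$ with a suitable pseudodifferential operator of infinite order, and then to derive the energy estimate \eqref{energy_estimate_cauchy_gelfand-shilov} by a standard energy method after a change of variable. Concretely, I would introduce the weight function $\lambda(x,\xi) = \rho_1\japxi^{1/\theta} + \rho_2\japx^{1/s}$ (or rather a time-dependent, slightly decreasing version of it) and consider the operator $e^{\lambda(x,D_x)}$ realizing the isomorphism $H^0_{\rho;s,\theta}(\R)\to L^2(\R)$. Setting $v(t) = e^{\Lambda(t,x,D_x)}u(t)$ for an appropriate $\Lambda$, I would compute $\widetilde{P} = e^{\Lambda}\,P\,e^{-\Lambda}$ using symbolic calculus for operators of infinite order in Gelfand-Shilov classes (as developed in the references \cite{Cappiello, CappielloRodino} cited in the excerpt) and show that $\widetilde{P} = D_t + a_p(t)D_x^p + \text{(lower order terms that are $L^2$-bounded after a further conjugation)}$.

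\medskip\noindent\textbf{Controlling the lower order terms.} The heart of the argument is the treatment of the terms $a_{p-j}(t,x)D_x^{p-j}$, $j=1,\dots,p-1$, whose coefficients are allowed to be complex-valued. Following the strategy of \cite{ABZsuff, AACM25}, I would introduce a second conjugation by $e^{\Theta(t,x,D_x)}$, where $\Theta$ is built from the decay hypothesis (ii): since $|\Im a_{p-j}(t,x)|\lesssim \japx^{-\frac{p-j}{p-1}\sigma}$, one chooses $\Theta$ of the form $M\int^{\langle\xi\rangle^{1/(p-1)}} \omega(x,\eta)\,d\eta$ with $\omega$ essentially $\japx^{-\frac{\sigma}{p-1}}$-like, so that the commutator $[a_p(t)D_x^p, \Theta]$ produces a term of order $p-1$ with the right sign to absorb the bad contributions of $\sum_j a_{p-j}D_x^{p-j}$. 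The conditions $\sigma \in \left(\frac{p-2}{p-1},1\right)$ and $(p-1)\theta < \min\{1/(1-\sigma), s\}$ are precisely what guarantee that $\Theta$ is a well-defined symbol in the relevant Gelfand-Shilov symbol class and that the ``remainder'' after conjugation is genuinely lower order; the condition $\theta\geq\theta_0$ ensures the Gevrey regularity of the coefficients is compatible with the regularity of the weight. I would verify the required symbol estimates for $\Theta$ and the boundedness on $L^2$ (Calder\'on–Vaillancourt type) of the resulting zero-order remainder.

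\medskip\noindent\textbf{Energy estimate and conclusion.} Once $P$ is conjugated to $\widetilde{P}_1 = D_t + a_p(t)D_x^p + R(t,x,D_x)$ with $R$ bounded on $L^2$ uniformly in $t$, the sharp G\r{a}rding inequality and the self-adjointness (modulo lower order) of $a_p(t)D_x^p$ give $\frac{d}{dt}\|w(t)\|_{L^2}^2 \leq C\left(\|w(t)\|_{L^2}^2 + \|\tilde f(t)\|_{L^2}^2\right)$ for $w = e^{\Theta}e^{\Lambda}u$ and $\tilde f$ the correspondingly transformed datum; Gronwall's lemma then yields the estimate. Tracking the constants through the conjugations shows the loss in the weights, i.e.\ that one ends up in $H^0_{(\tilde\rho_1,\delta);s,\theta}(\R)$ with $\tilde\rho_1<\rho_1$, $\delta<\rho_2$ (the loss in the first component coming from $\Lambda$ being taken time-decreasing to absorb the $D_x^p$ principal part, the loss in the second from $\Theta$). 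Uniqueness follows from the same energy estimate applied backward, and $u\in C^1$ in time from the equation itself. The main obstacle I anticipate is the careful bookkeeping in the symbolic calculus for operators of \emph{infinite order} in Gelfand-Shilov classes: one must show that all the iterated commutators and remainder terms arising from $e^{\Lambda}$ and $e^{\Theta}$ remain in admissible symbol classes with quantitatively controlled seminorms, which is where the interplay between the exponents $s$, $\theta$, $\theta_0$, $\sigma$, and $p$ becomes delicate.
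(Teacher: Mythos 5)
Your strategy is viable in principle---it is essentially the ``from scratch'' approach used in \cite{ABZsuff, AACM25} and in the earlier Gelfand--Shilov papers for $p=2,3$---but it is genuinely different from, and much heavier than, what the paper actually does. The paper does \emph{not} build an infinite-order pseudodifferential conjugation $e^{\Lambda(t,x,D_x)}$ reducing everything to $L^2$, nor does it redo the $\Theta$-type correction, the sharp G\aa rding step, or the Gronwall argument. Instead it conjugates $P$ only by the multiplication operator $e^{\delta\japx^{1/s}}$ (a purely Leibniz/Fa\`a di Bruno computation at the level of differential operators), checks that the conjugated operator $P_\delta$ is again of the form \eqref{differential_p_evolution_operator} with coefficients obeying assumption (ii) --- with $\sigma$ replaced by $\min\{\sigma,\,1-\tfrac1s\}$, which is exactly where the hypothesis $(p-1)\theta<\min\{\tfrac{1}{1-\sigma},s\}$ is consumed --- and then invokes the already-established Gevrey well-posedness result (Theorem \ref{gevreythm}, i.e.\ Theorem 1.1 of \cite{AACM25}) as a black box; the solution is recovered by multiplying back by $e^{-\delta\japx^{1/s}}$. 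What the paper's route buys is brevity and the avoidance of any infinite-order calculus in the $\xi$-variable; what your route would buy, if carried out, is a self-contained proof not relying on \cite{AACM25}. Two cautions if you pursue your version: (a) for $p\geq 3$ a single corrector $\Theta$ does not suffice --- one needs a sum of $p-1$ correctors $\lambda_{p-1}+\cdots+\lambda_1$, one per level $j$, each tuned to the decay rate $\japx^{-\frac{p-j}{p-1}\sigma}$ of assumption (ii); and (b) the new lower-order terms generated by conjugating $a_p(t)D_x^p$ with $e^{\rho_2\japx^{1/s}}$ decay only like $\japx^{k(1/s-1)}$, and verifying that these are admissible for the energy method is precisely the step where $s$ enters the threshold, a point your sketch attributes somewhat vaguely to the well-definedness of $\Theta$.
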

The proof of the above result is based on the reduction of the Cauchy problem \eqref{cauchy_problem_gelfand-shilov} to an equivalent Cauchy problem with data in Gevrey spaces for which we proved well-posedness in \cite{AACM25}.  
\\ \indent
The second part of the paper is devoted to prove that the case $(p-1)\theta = \min \left\lbrace \frac{1}{1-\sigma}, s \right\rbrace$ is a critical threshold for the well-posedness in $\mathcal{S}^\theta_s(\R)$. Namely we shall test on suitable models that if $(p-1)\theta > \min \left\lbrace \frac{1}{1-\sigma}, s \right\rbrace$, the Cauchy problem \eqref{cauchy_problem_gelfand-shilov} is not well-posed in $\mathcal{S}^\theta_s(\R)$ in general. The critical case remains an open problem, cf. Remark \ref{criticalcase} below.
\\ \indent
The paper is organized as follows. In Section \ref{Background} we recall a result concerning well-posedness in Gevrey spaces for the problem \eqref{cauchy_problem_gelfand-shilov} and explain the idea of the proof of Theorem \ref{theorem_main_result_3} which is based on a suitable conjugation of the operator $P$. In Section \ref{section_conjugation} we develop this conjugation and give an estimate of the coefficients of the conjugated operator. In Section \ref{section_proofthm1} we prove Theorem \ref{theorem_main_result_3}. Finally, in Section \ref{section_ill-posedness} we exhibit examples of operators satisfying the assumptions of Theorem \ref{theorem_main_result_3} for which the Cauchy problem \eqref{cauchy_problem_gelfand-shilov} is ill-posed if $(p-1)\theta > \min \left\lbrace \frac{1}{1-\sigma}, s \right\rbrace$.
\\

\section{Background theory and idea of the proof}\label{Background}
As we said in the Introduction, the well-posedness of \eqref{cauchy_problem_gelfand-shilov} in Gelfand-Shilov spaces  can be proved using a similar result proved in \cite{AACM25}
for the same initial value problem with data $f,g$ in Gevrey-type spaces. In order to recall this result, let us define, for fixed $m \in \R, \rho>0, \theta >1$ the space
$$H^m_{\rho;\theta}(\R) =\{u \in L^2(\R): \langle D \rangle^m e^{\rho \langle D \rangle^{1/\theta}}u \in L^2(\R)\}.$$
Notice that this space can be viewed as a particular case of the weighted Gevrey-Sobolev space defined by \eqref{wGSobolev} obtained for $ m_2 =\rho_2=0, m_1=m$ and $\rho_1=\rho.$ 
Moreover, we set $\mathcal{H}^\infty_\theta (\R):= \bigcup_{\rho >0}H^m_{\rho;\theta}(\R)$. Then we have the following result, cf. \cite[Theorem 1.1]{AACM25}.
 
\begin{theorem}\label{gevreythm}
	Let $P(t,x,D_t,D_x)$ be an operator of the form \eqref{differential_p_evolution_operator} whose coefficients satisfy the assumptions (i) and (ii) of Theorem \ref{theorem_main_result_3} for some $\theta_0>1$ and $\sigma \in \left(\frac{p-2}{p-1},1\right)$ such that $\theta_0 < \frac{1}{(p-1)(1-\sigma)}$. 
	Let $f \in C\left([0,T];H_{\rho;\theta}^m(\R)\right)$ and $g \in H_{\rho;\theta}^m(\R)$ for some $m,\rho, \theta \in\R$ with $\rho>0$, and $\theta \in \left[ \theta_0, \frac{1}{(p-1)(1-\sigma)} \right)$. Then the Cauchy problem \eqref{cauchy_problem_gelfand-shilov} admits a unique solution $u \in C\left([0,T];H_{\tilde \rho;\theta}^m(\R)\right)$ for some $\tilde \rho \in (0,\rho)$, and the solution satisfies the energy estimate
	\begin{equation}\label{energy_estimate_cauchy_gevrey}
		\| u(t) \|_{H_{\tilde \rho;\theta}^m}^2 \leq C \left( \| g \|^2_{H_{\rho;\theta}^m} + \int_0^t \| f(\tau) \|_{H_{\rho;\theta}^m}^2 d\tau \right),
	\end{equation}
	for all $t\in[0,T]$ and for some constant $C>0$. In particular, for $\theta\in\left[\theta_0,\frac{1}{(p-1)(1-\sigma)}\right)$ the Cauchy problem \eqref{cauchy_problem_gelfand-shilov} is well-posed in $\mathcal{H}_\theta^\infty(\R)$.
\end{theorem}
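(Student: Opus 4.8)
The plan is to establish the energy estimate \eqref{energy_estimate_cauchy_gevrey} by an $L^2$-based energy argument applied to a doubly conjugated operator, and then to deduce existence and uniqueness from it in the standard way. First I would reduce to $m=0$ by conjugating $P$ with $\langle D\rangle^m$, which perturbs the lower order coefficients only by terms of the same class, exactly as observed after Definition \ref{definition_gelfand-shilov-sobolev_well-posedness}. To account for the Gevrey weight defining $H^0_{\rho;\theta}(\R)$, I would then pass to the operator $P^{(1)}:=e^{\rho\langle D\rangle^{1/\theta}}\,P\,e^{-\rho\langle D\rangle^{1/\theta}}$. Since $a_p$ depends only on $t$ and $D_x^p$ commutes with $\langle D\rangle^{1/\theta}$, the principal part $a_p(t)D_x^p$ is unchanged; the commutators with the $x$-dependent terms $a_{p-j}(t,x)D_x^{p-j}$ generate an asymptotic series of new lower order contributions, each carrying an extra gain $\langle\xi\rangle^{1/\theta-1}$. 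The Gevrey regularity $\beta!^{\theta_0}$ of the coefficients in (ii), together with the hypothesis $\theta\geq\theta_0$, is precisely what makes these series converge in the relevant Gevrey symbol class, so that $P^{(1)}$ is again a $p$-evolution operator of the form \eqref{differential_p_evolution_operator} with real principal part and lower order coefficients obeying the decay estimates of (ii).

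The core of the argument is a second, pseudodifferential conjugation designed to neutralize the imaginary parts of the lower order coefficients, which otherwise obstruct a direct energy estimate: the pairing $\Re\langle i\,\Im(a_{p-1})D_x^{p-1}v,v\rangle$ produces an uncontrollable loss of derivatives. Following a Kajitani--Baba type change of variables, I would construct a real-valued symbol $\Lambda(t,x,\xi)=\sum_{j=1}^{p-1}\lambda_{p-j}(t,x,\xi)$ level by level, with $\lambda_{p-j}$ chosen so that the commutator of the principal part cancels $\Im a_{p-j}$ in the energy. Each $\lambda_{p-j}$ is built from primitives in $x$ of $\Im a_{p-j}(t,\cdot)/a_p(t)$ carried by suitable powers of $\langle\xi\rangle$; the decay $\langle x\rangle^{-\frac{p-j}{p-1}\sigma}$ in (ii) controls the growth of these primitives, while hypothesis (i), $|a_p(t)|\geq C_{a_p}$, makes the division by $a_p$ harmless. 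Using Gevrey symbolic calculus I would then check that $e^{\pm\Lambda}$ are well defined, bounded and mutually inverse as operators within the scale $\{H^0_{\rho;\theta}(\R)\}_\rho$, at the price of a controlled loss $\rho\mapsto\tilde\rho<\rho$. The conjugated operator $P^{(2)}:=e^{\Lambda}P^{(1)}e^{-\Lambda}$ is then a $p$-evolution operator in which the dangerous imaginary part of the subprincipal term has been absorbed, up to remainders bounded on $L^2$ uniformly in $t$.

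With $P^{(2)}$ at hand I would run a standard $L^2$ energy estimate. Setting $v(t)=e^{\Lambda}e^{\rho\langle D\rangle^{1/\theta}}u(t)$ and $\tilde f$ the corresponding transform of $f$, I compute $\frac{d}{dt}\|v(t)\|_{L^2}^2=2\Re\langle\partial_t v,v\rangle$ and use $P^{(2)}v=\tilde f$. The real principal part $a_p(t)\xi^p$ contributes a purely imaginary pairing and drops out; the imaginary subprincipal term has been removed by construction; the surviving contributions are skew-adjoint up to lower order or bounded on $L^2$, and the remaining nonnegative lower order errors are handled by the sharp G\aa rding inequality. Since $\Lambda$ depends on $t$, the term $\partial_t\Lambda$ yields an additional operator of lower order which is absorbed into the Gronwall constant. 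This gives $\frac{d}{dt}\|v(t)\|_{L^2}^2\leq C\big(\|v(t)\|_{L^2}^2+\|\tilde f(t)\|_{L^2}^2\big)$, and Gronwall's lemma produces the $L^2$ bound for $v$. Transferring back through the bounded isomorphisms $e^{-\Lambda}$ and $e^{-\rho\langle D\rangle^{1/\theta}}$ converts this into \eqref{energy_estimate_cauchy_gevrey} with $\tilde\rho\in(0,\rho)$. Existence then follows from the a priori estimate by the usual duality/approximation scheme, applying the same estimate to the backward adjoint problem, while uniqueness is immediate by taking $f=0$, $g=0$.

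The main obstacle is the construction of $\Lambda$ and the proof that $e^{\pm\Lambda}$ are bounded isomorphisms of the scale with a $t$-uniform, controlled loss. One must calibrate the $\xi$-orders of the pieces $\lambda_{p-j}$ so that each cancels exactly the imaginary part of $a_{p-j}$ in the energy, while the corrections it induces at lower levels, and the $x$-derivatives falling on the primitives, stay inside the Gevrey symbol class. The admissibility of $\Lambda$ --- namely that, once the growing primitives $\int^x\langle y\rangle^{-\frac{p-j}{p-1}\sigma}\,dy$ of the imaginary parts are incorporated and measured against the anisotropic $(x,\xi)$-scaling dictated by the order-$p$ principal symbol, $\Lambda$ remains subordinate to the Gevrey weight $\langle\xi\rangle^{1/\theta}$ --- is exactly what forces the quantitative threshold $(p-1)(1-\sigma)\theta<1$, that is $\theta<\frac{1}{(p-1)(1-\sigma)}$. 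Carrying out the Gevrey symbolic calculus for $e^{\pm\Lambda}$ with explicit tracking of the parameter loss $\rho\to\tilde\rho$, uniformly in $t\in[0,T]$, is the most delicate step of the proof.
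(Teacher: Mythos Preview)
This theorem is not proved in the present paper: it is quoted verbatim as \cite[Theorem~1.1]{AACM25}, so there is no proof here to compare your proposal against directly. That said, your outline is an accurate sketch of the strategy used in \cite{AACM25} and in the surrounding literature (\cite{KB,ABZsuff,AAC3evolGevrey}): reduce to $m=0$, conjugate by the Gevrey weight, perform a Kajitani--Baba type change of variables $e^{\Lambda}$ with $\Lambda=\sum_{j=1}^{p-1}\lambda_{p-j}$ built level by level from primitives of $\Im a_{p-j}/a_p$ to cancel the imaginary subprincipal parts, and close by an $L^2$ energy estimate plus Gronwall. Your identification of the threshold $(p-1)(1-\sigma)\theta<1$ as arising from the admissibility of $\Lambda$ against the weight $\langle\xi\rangle^{1/\theta}$ is exactly the mechanism.

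One refinement: in the actual argument of \cite{AACM25} the Gevrey weight is taken time-dependent, $e^{\rho(t)\langle D\rangle^{1/\theta}}$ with $\rho(t)$ strictly decreasing, so that $\partial_t$ produces an extra dissipative term $-\rho'(t)\langle D\rangle^{1/\theta}$ in the energy which absorbs the residual positive lower-order errors surviving the $e^{\Lambda}$ conjugation. Remark~\ref{criticalcase} in the paper alludes to this: at the critical threshold the decay rate of $\rho(t)$ would need to be so large that the time budget is exhausted before reaching $T$. Your version with a static weight and sharp G{\aa}rding for the remainders is conceptually equivalent, but the time-dependent weight makes the parameter loss $\rho\to\tilde\rho$ and its uniformity in $t$ more transparent.
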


The idea of the proof of Theorem \ref{theorem_main_result_3} is to reduce \eqref{cauchy_problem_gelfand-shilov} to an initial value problem with data in Gevrey spaces using the change of variable $$v(t,x)= e^{\delta \langle x \rangle^{1/s}}u(t,x)$$ for some $\delta \in (0,\rho_2)$. This change of variable pulls back the data space for $f$ and $g$ to some Gevrey-Sobolev space and leads to a new Cauchy problem for the conjugated operator $P_\delta:=e^{\delta\japx^{1/s}} P e^{-\delta\japx^{1/s}}$ with data 
$$
e^{\delta\japx^{1/s}} f(t,x) \in C([0,T], H_{\rho_1;\theta}^0(\R)), \ e^{\delta\japx^{1/s}} g(x) \in H_{\rho_1;\theta}^0(\R).
$$
Namely, we are reduced to consider the problem
\begin{equation}\label{auxiliary_cauchy_problem_gelfand-shilov}
	\left\lbrace\begin{array}{l}
		P_\delta(t,x,D_t,D_x) v(t,x) = e^{\delta\japx^{1/s}} f(t,x) \\ 
		v(0,x) = e^{\delta\japx^{1/s}} g(x)
	\end{array} \right., \quad (t,x) \in [0,T] \times \R. 
\end{equation}
In order to apply Theorem \ref{gevreythm} to this problem, we need to verify that the new operator $P_\delta$ is of the same form of $P$ and satisfies the assumptions of Theorem \ref{gevreythm}. Then we obtain that \eqref{auxiliary_cauchy_problem_gelfand-shilov} admits a unique solution $v \in C\left([0,T];H_{\tilde \rho_1;\theta}^0(\R)\right)$ for some $\tilde \rho_1 \in (0,\rho_1)$. Once we have this, it is easy to show that $u(t,x)=e^{-\delta \japx^{1/s}}v(t,x) \in C\left([0,T];H_{(\tilde \rho_1, \delta);s,\theta}^m(\R)\right)$ is the unique solution of \eqref{cauchy_problem_gelfand-shilov} and satisfies
\eqref{energy_estimate_cauchy_gelfand-shilov} for $\tilde \rho_2 = \delta$.
The next section is devoted to study the conjugated operator $P_\delta$ and to give a suitable estimate of its coefficients.

\section{Conjugation of $P$ by $e^{\delta\japx^{1/s}}$}\label{section_conjugation}

In this section we perform the conjugation of the operator $P(t,x,D_t,D_x)$ by $e^{\delta\japx^{1/s}}$ and its inverse, where $\delta \in (0,\rho_2)$. The conjugation will be made term-by-term, and the next steps are dedicated to this purpose.

\begin{itemize}

\item \textbf{The conjugation of $D_t$.} Since $e^{\delta\japx^{1/s}}$ does not depend on $t$, the conjugation is trivially given by
$$
e^{\delta\japx^{1/s}} \circ D_t \circ e^{-\delta\japx^{1/s}} = D_t.
$$

\item \textbf{The conjugation of $a_p(t)D_x^p$.} Using Leibniz formula, we get
\begin{eqnarray}
e^{\delta\japx^{1/s}} D_x^p e^{-\delta\japx^{1/s}} & = & D_x^p + \mathbf{op} \left( \sum_{k=1}^p \frac{1}{k!} \cdot \frac{p!}{(p-k)!} \xi^{p-k} e^{\delta\japx^{1/s}} D_x^k e^{-\delta\japx^{1/s}} \right) \nonumber \\
& = & D_x^p + \sum_{k=1}^p b_{p-k}^{(\delta)}(x) D_x^{p-k}, \nonumber
\end{eqnarray}
where
$$
b_{p-k}^{(\delta)}(x) := \binom{p}{k} e^{\delta\japx^{1/s}} D_x^k e^{-\delta\japx^{1/s}}.
$$
Now, let us estimate the terms $b_{p-k}^{(\delta)}$. By Fa{\`a} di Bruno formula, it follows that
$$
e^{\delta\japx^{1/s}} D_x^k e^{-\delta\japx^{1/s}} = \sum_{\ell = 1}^k \frac{1}{\ell !}\sum_{\stackrel{k_1 + \cdots + k_\ell = k}{k_\nu \geq 1}} \frac{k!}{k_1! \cdots k_\ell !} \prod_{\nu = 1}^\ell D_x^{k_\nu} \left(-\delta \japx^{1/s} \right).
$$
Notice that, since $|\partialx^\beta \japx^m| \leq C_m^\beta \beta! \japx^{m-\beta}$ for all $m \in \R$, we obtain the estimate
\begin{eqnarray}
\left| \prod_{\nu = 1}^\ell D_x^{k_\nu} \left( -\delta\japx^{1/s} \right) \right| & = & \delta^\ell \prod_{\nu = 1}^\ell \left| D_x^{k_\nu} \japx^{1/s} \right| \leq \delta^\ell \prod_{\nu = 1}^\ell C^{k_\nu} k_\nu ! \japx^{\frac{1}{s}-k_\nu} \nonumber \\
& \leq & C_\delta^{k+1} k_1! \cdots k_\ell ! \japx^{k \left( \frac{1}{s} - 1 \right)}. \nonumber
\end{eqnarray}
Hence,
$$
| b_{p-k}^{(\delta)}(x) | \leq \frac{p!}{k!(p-k)!} \sum_{\ell = 1}^k \frac{1}{\ell !} \sum_{\stackrel{k_1 + \cdots + k_\ell =k}{k_\nu \geq 1}} k! C_\delta^{k+1} \japx^{k \left( \frac{1}{s} - 1 \right)},
$$
which gives us that $b_{p-k}^{(\delta)} \sim \japx^{k \left( \frac{1}{s} - 1 \right)}$. Similarly we obtain that
\begin{equation}\label{estimate_b_p-k}
| D_x^\beta b_{p-k}^{(\delta)}(x) | \leq \tilde{C}_{\delta}^{\beta + k + 1} \beta! \japx^{k \left( \frac{1}{s} - 1 \right) - \beta},
\end{equation}
for all $\beta \in \mathbb{N}_0$. Therefore, the conjugation of $a_p(t)D_x^p$ is given by
$$
e^{\delta\japx^{1/s}} (a_p(t) D_x^p) e^{-\delta\japx^{1/s}} = a_p(t) D_x^p + \sum_{k=1}^p \tilde{a}_{p-k}^{(\delta)}(t,x) D_x^{p-k},
$$
where $\tilde{a}_{p-k}^{(\delta)}(t,x) = a_p(t) b_{p-k}^{(\delta)}(x) \in C([0,T]; \mathcal{B}^\infty(\R))$ and satisfy, for every $k=1,...,p$, the following estimate:
\begin{equation}\label{estimate_tilde_a_p-k}
\sup_{t \in [0,T]} | D_x^\beta \tilde{a}_{p-k}^{(\delta)}(t,x) | \leq C_{\delta,1}^{\beta + 1} \beta! \japx^{k \left( \frac{1}{s} -1 \right) - \beta}.
\end{equation}

\item \textbf{The conjugation of $a_{p-j}(t,x)D_x^{p-j}$, $j=1,...,p-1$.} For each $j=1,...,p-1$, by using again Leibniz formula we get
\begin{eqnarray}
e^{\delta\japx^{1/s}} (a_{p-j}(t,x)D_x^{p-j}) e^{-\delta\japx^{1/s}} & = & a_{p-j}(t,x)D_x^{p-j} \nonumber \\
& + & \sum_{\ell=1}^{p-j} \frac{1}{\ell!} a_{p-j}(t,x) \frac{(p-j)!}{(p-j-\ell)!} e^{\delta\japx^{1/s}} D_x^\ell e^{-\delta\japx^{1/s}} D_{x}^{p-j-\ell}\nonumber \\
& = & a_{p-j}(t,x)D_x^{p-j} + \sum_{\ell=1}^{p-j} a_{p-j}(t,x) c_{p-j-\ell}^{(\delta)}(x) D_x^{p-j-\ell}, \nonumber
\end{eqnarray}
where
$$
c_{p-j-\ell}^{(\delta)}(x) = \binom{p-j}{\ell} e^{\delta\japx^{1/s}} D_x^\ell e^{-\delta\japx^{1/s}}.
$$
By the same argument used before, it follows that $c_{p-j-\ell}^{(\delta)}$ satisfy \eqref{estimate_b_p-k}, for every $j=1,...,p-1$ and $\ell=1,...,p-j$.

\item \textbf{The conjugation of $a_0(t,x)$.} It is simply given by
$$
e^{\delta\japx^{1/s}} a_0(t,x) e^{-\delta\japx^{1/s}} = a_0(t,x).
$$

\end{itemize}

By the previous analysis, we can assert that the operator $P_\delta(t,x,D_t,D_x)$ can be written as
\begin{eqnarray}
 P_\delta (t,x,D_t,D_x) & = & D_t + a_p(t)D_x^p + \sum_{j=1}^{p-1} a_{p-j}(t,x)D_x^{p-j} + a_0(t,x) \nonumber \\
& + & \sum_{k=1}^{p-1} \tilde{a}_{p-k}^{(\delta)}(t,x)D_x^{p-k} + \tilde{a}_0^{(\delta)}(t,x) + \sum_{j=1}^{p-1} \sum_{\ell=1}^{p-j} a_{p-j}(t,x) c_{p-j-\ell}^{(\delta)}(x) D_x^{p-j-\ell} \nonumber.
\end{eqnarray}
Notice that the double sum in the last term can be expressed as 
$$
\sum_{j=1}^{p-1} a_{p-j}(t,x) \sum_{\ell=1}^{p-j} c_{p-j-\ell}^{(\delta)}(x) D_x^{p-j-\ell} =  \sum_{k=2}^{p} d_{p-k}^{(\delta)}(t,x) D_x^{p-k},
$$
where
$$
d_{p-k}^{(\delta)} (t,x):= c_{p-k}^{(\delta)}(x) \sum_{\ell = k-1}^p a_{p-\ell}(t,x), \quad k=2,...,p.
$$
In particular, we have that $d_{p-k}^{(\delta)}, k=2,\ldots, p,$ satisfy \eqref{estimate_tilde_a_p-k}. Hence,
\begin{eqnarray}
e^{\delta\japx^{1/s}} \circ P \circ e^{-\delta\japx^{1/s}} & = & D_t + a_p(t) D_x^p + a_{p-1}(t,x) D_x^{p-1} + \sum_{j=2}^{p-1} a_{p-j}	(t,x) D_x^{p-j} + a_0(t,x) \nonumber \\
& + & \tilde{a}_{p-1}^{(\delta)}(t,x) D_x^{p-1} + \sum_{k=2}^{p-1} \tilde{a}_{p-k}^{(\delta)}(t,x) D_x^{p-k} + \tilde{a}_0^{(\delta)}(t,x) \nonumber \\
& + & \sum_{k=2}^{p-1} d_{p-k}^{(\delta)} (t,x) D_x^{p-k} + d_0^{(\delta)}(t,x), \nonumber
\end{eqnarray}
and, finally,
\begin{eqnarray}\label{first_conjugation_structure_gelfand-shilov}
P_\delta(t,x,D_t,D_x) & = & D_t + a_p(t) D_x^p + \left\lbrace a_{p-1}(t,x) + \tilde{a}_{p-1}^{(\delta)}(t,x) \right\rbrace D_x^{p-1} \nonumber \\
& + & \sum_{j=2}^{p} \left\lbrace a_{p-j}(t,x) + \tilde{a}_{p-j}^{(\delta)}(t,x) + d_{p-j}^{(\delta)}(t,x) \right\rbrace D_x^{p-j} \nonumber \\
& = & D_t + ia_p(t) D_x^p + \sum_{j=1}^p {a}_{p-j}^{(\delta)}(t,x) D_x^{p-j},
\end{eqnarray}
where
$$
\begin{array}{l}
a_{p-1}^{(\delta)}(t,x) := a_{p-1}(t,x) + \tilde{a}_{p-1}^{(\delta)}(t,x), \\ 
a_{p-j}^{(\delta)}(t,x) := a_{p-j}(t,x) + \tilde{a}_{p-j}^{(\delta)}(t,x) + d_{p-j}^{(\delta)}(t,x), \quad 2 \leq j \leq p.
\end{array} 
$$

From the previous estimates we obtain that $a_{p-j}^{(\delta)}$ satisfy the following estimate:
\begin{equation}\label{estimate_a_p-j^delta}
\sup_{t \in [0,T]} \left( | D_x^\beta \tilde{a}_{p-j}^{(\delta)}(t,x) | + | D_x^\beta d_{p-j}^{(\delta)}(t,x) | \right) \leq C_{\delta}^{\beta + 1} \beta!^{\theta_0} \japx^{-j \left( 1 - \frac{1}{s} \right) - \beta}.
\end{equation}

We are now ready to prove Theorem \ref{theorem_main_result_3}.

\section{Proof of Theorem \ref{theorem_main_result_3}} \label{section_proofthm1}

To prove Theorem \ref{theorem_main_result_3}, we shall apply Theorem \ref{gevreythm} to the operator $P_\delta$, so we need to show that the operator $P_\delta$ satisfies the assumptions of the latter theorem. With this purpose we distinguish two cases.

\textbf{The case $s \geq \frac{1}{1-\sigma}$.} In this case, by the assumption of Theorem \ref{theorem_main_result_3} we have
$$
\theta < \frac{1}{(p-1)(1-\sigma)} \leq \frac{s}{p-1}.
$$   
In this case we observe that the coefficients of $P_\delta$ satisfy the assumptions of Theorem \ref{gevreythm}, that is,
$$
j \left( 1 - \frac{1}{s} \right) \geq \frac{p-j}{p-1}\sigma, \quad \forall j = 1,..., p.
$$
In fact, we notice that as $j$ increases, the left-hand side of the above inequality increases and the right-hand side decreases, so this inequality holds for all $j=1,...,p$ if, and only if, it holds for $j=1$, that is, 
$$
\left( 1 - \frac{1}{s} \right) \geq \frac{p-1}{p-1}\sigma \quad \Leftrightarrow \quad 1 - \frac{1}{s} \geq \sigma \quad \Leftrightarrow \quad s \geq \frac{1}{1-\sigma},
$$
which is true. Hence, we have
\begin{equation}\label{estimate_sup_a_p-j^delta}
\sup_{t \in [0,T]} | D_x^\beta a_{p-j}^{(\delta)} (t,x) | \leq C_{\delta}^{\beta+1} \beta!^{\theta_0} \japx^{-\frac{p-j}{p-1}\sigma-\beta}.
\end{equation}
Let us then consider the Cauchy problem \eqref{cauchy_problem_gelfand-shilov} with data
$$
f \in C \left( [0,T];H_{\rho;s,\theta}^0(\R) \right) \quad \text{and} \quad g \in H_{\rho;s,\theta}^0(\R),
$$
for $\rho=(\rho_1,\rho_2)$, with $\rho_1,\rho_2>0$. If $\delta \in (0,\rho_2)$, then
$$
f_\delta := e^{\delta\japx^{1/s}}f \in C \left( [0,T];H_{(\rho_1,\rho_2-\delta);s,\theta}^0(\R) \right) \quad \text{and} \quad g_\delta := e^{\delta\japx^{1/s}}g \in H_{(\rho_1,\rho_2-\delta);s,\theta}^0(\R),
$$
because, if $\phi \in H_{\rho;s,\theta}^0(\R)$, then 
\begin{equation}\label{phi_in_gelfand-shilov-sobolev}
\phi_{(\rho_1,\rho_2)}(x) := e^{\rho_2\japx^{1/s}} e^{\rho_1\langle D \rangle^{1/\theta}} \phi \in L^2(\R),
\end{equation}
and
\begin{eqnarray}
e^{(\rho_2-\delta)\japx^{1/s}} e^{\rho_1 \langle D \rangle^{1/\theta}} \underbrace{e^{\delta \japx^{1/s}} \phi}_{:= \phi_\delta} & = & e^{(\rho_2 - \delta) \japx^{1/s}} e^{\rho_1 \langle D \rangle^{1/\theta}} e^{\delta \japx^{1/s}} \left( e^{\rho_2 \japx^{1/s}} e^{\rho_1 \langle D \rangle^{1/\theta}} \right)^{-1} \underbrace{e^{\rho_2 \japx^{1/s}} e^{\rho_1 \langle D \rangle^{1/\theta}} \phi}_{=\phi_{(\rho_1,\rho_2)}} \nonumber \\
& = & \underbrace{e^{(\rho_2 - \delta) \japx^{1/s}} e^{\rho_1 \langle D \rangle^{1/\theta}} e^{\delta \japx^{1/s}} e^{-\rho_1 \langle D \rangle^{1/\theta}} e^{-\rho_2 \japx^{1/s}}}_{:=A_{(\rho_1,\rho_2,\delta)}} \phi_{(\rho_1,\rho_2)} \in L^2(\R), \nonumber
\end{eqnarray}
since $A_{(\rho_1,\rho_2,\delta)}$ has order zero and $\phi_{(\rho_1,\rho_2)} \in L^2(\R)$, and this means that $\phi_\delta \in H_{(\rho_1,\rho_2-\delta);s,\theta}^0(\R)$.

Since $\rho_2-\delta>0$, it follows that 
$$
f_\delta \in C \left( [0,T];H_{\rho_1;\theta}^0(\R) \right) \quad \text{and} \quad g_\delta \in H_{\rho_1;\theta}^0(\R).
$$
Since we are considering $\theta \in \left[ \theta_0, \min \left\lbrace \frac{1}{(p-1)(1-\sigma)}, \frac{s}{p-1} \right\rbrace \right)$, we obtain that the auxiliary Cauchy problem given by \eqref{auxiliary_cauchy_problem_gelfand-shilov} is well-posed in $\mathcal{H}_\theta^\infty(\R)$, that is, there exists a unique solution $v \in C^1 \left( [0,T];H_{\tilde{\rho}_1;\theta}^0(\R) \right)$ of \eqref{auxiliary_cauchy_problem_gelfand-shilov}, with initial data $f_\delta$ and $g_\delta$, satisfying
\begin{equation}\label{energy_estimate_auxiliary_gelfand-shilov}
\| v(t,\cdot) \|_{H_{\tilde{\rho}_1;\theta}^0} \leq C(\delta,\rho_1,T) \left( \| g_\delta \|_{H_{\rho_1;\theta}^0}^2 + \int_0^t \| f_\delta(\tau,\cdot) \|_{H_{\rho_1;\theta}^0}^2 d\tau \right),
\end{equation}
for some $\tilde{\rho}_1 \in (0,\rho_1)$. Now we set $u(t,x) := e^{-\delta \japx^{1/s}} v(t,x)$. Notice that $u \in C^1 \left( [0,T]; H_{(\tilde{\rho}_1,\delta);s,\theta}(\R) \right)$ and it solves the Cauchy problem \eqref{cauchy_problem_gelfand-shilov}, because
$$
u(0,x) = e^{-\delta \japx^{1/s}} v(0,x) = e^{-\delta \japx^{1/s}} g_\delta(x) = g(x),
$$
and $P_\delta(t,x,D_t,D_x) v(t,x) = f_\delta(t,x)$ implies that
$$
e^{\delta \japx^{1/s}} P(t,x,D_t,D_x) \underbrace{e^{-\delta \japx^{1/s}} v(t,x)}_{= u(t,x)} = e^{\delta \japx^{1/s}} f(t,x) \quad \Rightarrow \quad P(t,x,D_t,D_x) u(t,x) = f(t,x).
$$
Finally, we can use estimate \eqref{energy_estimate_auxiliary_gelfand-shilov} to obtain
\begin{eqnarray}
\| u(t,\cdot) \|_{H_{(\tilde{\rho}_1,\delta);s,\theta}^0} & \leq & C(\delta) \| v(t,\cdot) \|_{H_{\tilde{\rho}_1;\theta}^0} \nonumber \\
& \leq & C(\rho_1,\delta,T) \left( \| g_\delta \|_{H_{\rho_1;\theta}^0}^2 + \int_0^t \| f_\delta(\tau,\cdot) \|_{H_{\rho_1;\theta}^0}^2 d\tau \right) \nonumber \\
& \leq & C(\rho_1,\delta,T) \left( \| g \|_{H_{(\rho_1,\delta);s,\theta}^0}^2 + \int_0^t \| f(\tau,\cdot) \|_{H_{(\rho_1,\delta);s,\theta}^0}^2 d\tau \right) \nonumber \\
& \leq & C(\rho_1,\delta,T) \left( \| g \|_{H_{\rho;s,\theta}^0}^2 + \int_0^t \| f(\tau,\cdot) \|_{H_{\rho;s,\theta}^0}^2 d\tau \right), \nonumber
\end{eqnarray}
where $C(\rho_1,\delta,T)$ is a positive constant depending on $\rho_1$, $\delta$ and $T$.

To prove the uniqueness of the solution, let us consider two solutions
$$
u_j \in C^1 \left( [0,T];H_{(\tilde{\rho}_1,\delta);s,\theta}^0(\R) \right), \quad j=1,2,
$$
with $\delta<\rho_2$, for the Cauchy problem \eqref{cauchy_problem_gelfand-shilov}. By taking any $\tilde{\delta} \in (0,\delta)$, notice that
$$
v_j := e^{\tilde{\delta} \japx^{1/s}} u_j, \quad j=1,2,
$$
are solutions of \eqref{auxiliary_cauchy_problem_gelfand-shilov}, for $\delta$ replaced by $\tilde{\delta}$, and also $v_j \in C^1 \left( [0,T];H_{\tilde{\rho}_1;\theta}^0(\R) \right)$. The well-posedness in $\mathcal{H}_\theta^\infty(\R)$ of \eqref{auxiliary_cauchy_problem_gelfand-shilov} gives us $v_1=v_2$, hence, $e^{\tilde{\delta} \japx^{1/s}} u_1 = e^{\tilde{\delta} \japx^{1/s}} u_2$, which implies that $u_1=u_2$.
\\

\textbf{The case $s < \frac{1}{1-\sigma}$.} The assumption of Theorem \ref{theorem_main_result_3} becomes
$$
\theta_0 \leq \theta < \frac{s}{p-1} < \frac{1}{(p-1)(1-\sigma)}.
$$
In this case we observe that $\japx^{-\frac{p-j}{p-1}\sigma} \leq \japx^{-\frac{p-j}{p-1}\left(1-\frac{1}{s}\right)}$ and $\japx^{-j\left(1-\frac{1}{s}\right)} \leq \japx^{-\frac{p-j}{p-1}\left(1-\frac{1}{s}\right)}$, so we get
$$
\sup_{t \in [0,T]} | D_x^\beta a_{p-j}^{(\delta)}(t,x) | \leq C^{\beta+1} \beta!^{\theta_0} \japx^{-\frac{p-j}{p-1}(1-\frac{1}{s})-\beta}.
$$
Then we can repeat the same argument used in the first case with $\sigma$ replaced by $1-\frac{1}{s}$. Notice that $1-\frac{1}{s} \in \left( \frac{p-2}{p-1},1 \right)$ since
$$
1-\frac{1}{s} > \frac{p-2}{p-1} \quad \Leftrightarrow \quad \frac{1}{s} < \frac{1}{p-1} \quad \Leftrightarrow \quad s > p - 1
$$
which holds true since $\frac{s}{p-1}>\theta>1$. Then in this case the Cauchy problem is well-posed in $\mathcal{S}^\theta_s(\R)$ for $\theta_0 \leq \theta < \frac{1}{(p-1)\left(1-1+\frac{1}{s}\right)}=\frac{s}{p-1}=\min \left\lbrace \frac{1}{(p-1)(1-\sigma)},\frac{s}{p-1} \right\rbrace$. This concludes the proof of Theorem \ref{theorem_main_result_3}.
\begin{flushright}
$\qed$
\end{flushright}

%


\section{Ill-posedness results for model operators}\label{section_ill-posedness}

In the previous section we have proved Theorem \ref{theorem_main_result_3} under the assumption
$$
(p-1)\theta < \min \left\lbrace \frac{1}{1-\sigma}, s \right\rbrace.
$$
The aim of this section is to prove, by counterexamples, that if
\begin{equation}\label{inequality_sharpness}
(p-1)\theta > \min \left\lbrace \frac{1}{1-\sigma}, s \right\rbrace,
\end{equation}
then the Cauchy problem \eqref{cauchy_problem_gelfand-shilov} is not well-posed in $\mathcal{S}_{s}^{\theta}(\R)$, in general, for an operator $P$ satisfying the conditions (i) and (ii) of Theorem \ref{theorem_main_result_3}. With this purpose we distinguish two cases.
\\

\textbf{The case $s \leq \frac{1}{1-\sigma}$.} In this case, \eqref{inequality_sharpness} turns into the condition $s < (p-1)\theta$. Then we have the following result.

\begin{proposition}\label{cauchy_problem_similar_schrodinger}
Let $s,\theta>1$ such that $s < (p-1)\theta$. Then there exists $g \in \mathcal{S}^\theta_s(\R)$ such that the Cauchy problem
\begin{equation} \label{modelSchrodinger}
\left\lbrace \begin{array}{l}
(D_t + D_x^p) u = 0 \\ 
u(0,x) = g(x)
\end{array}, \quad (t,x) \in [0,T] \times \R,  \right.
\end{equation}
admits a unique solution which is not in $\mathcal{S}^\theta_s(\R)$.
\end{proposition}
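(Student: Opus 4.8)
\emph{Strategy.} Since the operator $D_t+D_x^p$ has constant coefficients, the Cauchy problem \eqref{modelSchrodinger} is solved explicitly by the Fourier multiplier $e^{-it\xi^p}$, and the plan is to choose $g$ so that at times $t\neq 0$ the solution loses the exponential decay required for membership in $\mathcal S^\theta_s(\R)$. If \eqref{modelSchrodinger} were well-posed in $\mathcal S^\theta_s(\R)$ in the sense of Definition \ref{definition_gelfand-shilov-sobolev_well-posedness}, then for any $g\in H^0_{\rho;s,\theta}(\R)$ (with $\rho_1,\rho_2>0$) there would exist $\tilde\rho$ with $\tilde\rho_j>0$ and a solution $u\in C^1([0,T];H^0_{\tilde\rho;s,\theta}(\R))$; in particular $u(t,\cdot)\in H^0_{\tilde\rho;s,\theta}(\R)\subset\mathcal S^\theta_s(\R)$ for every $t\in[0,T]$. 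Any such $u$ also solves \eqref{modelSchrodinger} in $\mathcal S'(\R)$, and since the Fourier transform turns the equation into $\partial_t\hat u=-i\xi^p\hat u$, $\hat u(0,\cdot)=\hat g$, whose only solution in $\mathcal S'(\R)$ is $\hat u(t,\xi)=e^{-it\xi^p}\hat g(\xi)$. Hence it suffices to exhibit $g\in\mathcal S^\theta_s(\R)$ for which $\mathscr F^{-1}(e^{-it\xi^p}\hat g)\notin\mathcal S^\theta_s(\R)$ for some $t\in(0,T]$.

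\emph{Choice of the datum and reduction to a lower bound.} I would take $\hat g(\xi)=e^{-\langle\xi\rangle^{1/\theta}}$. As $\langle\xi\rangle^{1/\theta}$ is real-analytic, the Fa\`a di Bruno formula gives $|\partial_\xi^\alpha e^{-\langle\xi\rangle^{1/\theta}}|\le C^{\alpha+1}\alpha!\,e^{-c\langle\xi\rangle^{1/\theta}}$, so $\hat g\in\mathcal S^1_\theta(\R)$ and therefore $g=\mathscr F^{-1}\hat g\in\mathcal S^\theta_1(\R)\subset\mathcal S^\theta_s(\R)$; in particular $g\in H^0_{\rho;s,\theta}(\R)$ for a suitable $\rho$. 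The associated solution is the oscillatory integral
\[
u(t,x)=\frac1{2\pi}\int_{\R}e^{i(x\xi-t\xi^p)}\,e^{-\langle\xi\rangle^{1/\theta}}\,d\xi .
\]
I would then prove that, for each fixed $t\in(0,T]$, there exist $\kappa\geq 0$, $c_t>0$ and a sequence $x_n\to+\infty$ with
\[
|u(t,x_n)|\gtrsim x_n^{-\kappa}\,e^{-c_t\,x_n^{1/((p-1)\theta)}}.
\]
Granting this, if $u(t,\cdot)$ belonged to $\mathcal S^\theta_s(\R)$ we would have $|u(t,x)|\le C e^{-c|x|^{1/s}}$ for some $c>0$, whence $e^{c\,x_n^{1/s}}|u(t,x_n)|\gtrsim x_n^{-\kappa}\exp\!\big(c\,x_n^{1/s}-c_t\,x_n^{1/((p-1)\theta)}\big)\to+\infty$, since $s<(p-1)\theta$ forces $1/s>1/((p-1)\theta)$. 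This contradiction proves $u(t,\cdot)\notin\mathcal S^\theta_s(\R)$, so that $g$ is the required datum.

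\emph{The lower bound via stationary phase.} Fix $t\in(0,T]$, say $t>0$, and let $x\to+\infty$. The phase $\phi(\xi)=x\xi-t\xi^p$ has the stationary point $\xi_0=\xi_0(x)=(x/(pt))^{1/(p-1)}\asymp x^{1/(p-1)}$ (and, when $p$ is odd, also $-\xi_0$), with $|\phi''(\xi_0)|=p(p-1)t\,\xi_0^{p-2}$. Using a smooth partition of unity, one isolates a neighbourhood $\{|\xi\mp\xi_0|<\xi_0/2\}$ of each stationary point; on the complement $|\phi'(\xi)|=|x-pt\xi^{p-1}|$ is bounded below by a positive power of $\langle\xi\rangle$ (by $x/2$ for $|\xi|<\xi_0/2$, by $\tfrac12 pt|\xi|^{p-1}$ for $|\xi|>2\xi_0$), so repeated integration by parts — using $|\partial_\xi^j e^{-\langle\xi\rangle^{1/\theta}}|\le C^{j+1}j!\,e^{-\langle\xi\rangle^{1/\theta}}$ and optimizing the number of steps — shows that the complementary contribution is $O(e^{-c\,x^{1/(p-1)}})$, which is negligible compared with $e^{-c_t\,x^{1/((p-1)\theta)}}$ since $\theta>1$. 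Near $\xi_0$ the amplitude $e^{-\langle\xi\rangle^{1/\theta}}$ may be replaced, up to a factor $1+o(1)$, by the constant $e^{-\langle\xi_0\rangle^{1/\theta}}$ over the effective window $|\xi-\xi_0|\lesssim|\phi''(\xi_0)|^{-1/2}=O(\xi_0^{-(p-2)/2})$ — this is legitimate because $\xi_0^{-(p-2)/2}\ll\xi_0^{1-1/\theta}$, the scale of variation of $e^{-\langle\xi\rangle^{1/\theta}}$ — so the classical stationary phase expansion yields a contribution $\asymp\xi_0^{-(p-2)/2}e^{-\langle\xi_0\rangle^{1/\theta}}$. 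For $p$ even this already gives $|u(t,x)|\gtrsim x^{-(p-2)/(2(p-1))}e^{-c_t\,x^{1/((p-1)\theta)}}$ for all large $x$. For $p$ odd the two stationary contributions combine into $\xi_0^{-(p-2)/2}e^{-\langle\xi_0\rangle^{1/\theta}}$ times a factor proportional to $\cos(\phi(\xi_0(x))-\tfrac\pi4)$; since $\phi(\xi_0(x))=(p-1)t\,\xi_0(x)^p\asymp x^{p/(p-1)}$ is continuous and strictly increasing to $+\infty$, one may choose $x_n\to+\infty$ along which this cosine stays $\geq\tfrac12$, which yields the stated bound along $\{x_n\}$.

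\emph{Main obstacle.} The weight of the argument lies entirely in the stationary phase step: one must estimate the non-stationary contributions sharply enough to beat the exponentially small main term — in particular on $\{|\xi|<\xi_0/2\}$, where $|\phi'|$ is large but the amplitude $e^{-\langle\xi\rangle^{1/\theta}}$ is of size $O(1)$ near $\xi=0$, and on $\{|\xi|>2\xi_0\}$, where the amplitude already decays but one still has to exploit the oscillation — and one must justify the constant-amplitude reduction near $\xi_0$ and, for $p$ odd, control the interference of the two stationary points by passing to a subsequence of $x$'s. When $p=2$ everything collapses, via $x\xi-t\xi^2=-t(\xi-x/(2t))^2+x^2/(4t)$, to an essentially explicit Gaussian computation giving $|u(t,x)|\asymp e^{-c_t|x|^{1/\theta}}$.
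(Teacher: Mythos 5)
Your strategy is genuinely different from the paper's. The paper proves Proposition \ref{cauchy_problem_similar_schrodinger} in two lines by invoking Theorem 1.2 of \cite{AW24}, which states that for $1<s<(p-1)\theta$ and any real polynomial $q$ of degree $p$ there exists $\varphi\in\mathcal S^s_\theta(\R)$ with $e^{iq}\varphi\notin\mathcal S^s_\theta(\R)$; applying this to $q(\xi)=t\xi^p$ and taking $g=\mathscr F^{-1}\varphi$ finishes the argument. You instead attempt to reprove the relevant special case of that multiplier theorem by hand. Your reduction is correct (uniqueness of the tempered solution, the choice $\widehat g=e^{-\langle\cdot\rangle^{1/\theta}}\in\mathcal S^1_\theta$ so that $g\in\mathcal S^\theta_1\subset\mathcal S^\theta_s$, the comparison of exponents $1/s>1/((p-1)\theta)$, and the subsequence device to handle the interference of the two critical points when $p$ is odd), and the predicted size $\xi_0^{-(p-2)/2}e^{-\langle\xi_0\rangle^{1/\theta}}$ of the main term with $\xi_0\asymp x^{1/(p-1)}$ is the right heuristic. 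A self-contained proof along these lines would be a nice addition; but as written it is not one.

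The gap is the central lower bound itself, which is asserted rather than proved, and the justification you sketch for the ``constant-amplitude reduction'' would fail if implemented literally. The standard stationary phase theorem controls the remainder by sup-norms of derivatives of the amplitude over the support of the localizing cutoff; on your window $|\xi-\xi_0|<\xi_0/2$ that sup-norm is of order $e^{-(\xi_0/2)^{1/\theta}}$, which exceeds the main term $e^{-\xi_0^{1/\theta}}$ by the exponentially large factor $e^{(1-2^{-1/\theta})\xi_0^{1/\theta}}$, so the remainder estimate swamps the main term instead of being dominated by it. Comparing the Fresnel scale $|\phi''(\xi_0)|^{-1/2}$ with the amplitude's scale of variation $\xi_0^{1-1/\theta}$ does not by itself repair this: you need a second localization at scale $\xi_0^{1-1/\theta}$ (where the amplitude really is comparable to $e^{-\xi_0^{1/\theta}}$ and stationary phase applies, at least for $p\ge3$; for $p=2$ you rightly fall back on the explicit Gaussian) \emph{plus} a quantitative non-stationary phase estimate in the transition annulus $\xi_0^{1-1/\theta}\lesssim|\xi-\xi_0|\le\xi_0/2$, where $|\phi'(\xi)|\gtrsim\xi_0^{p-2}|\xi-\xi_0|$ but the amplitude grows toward $\xi=\xi_0/2$; whether the oscillatory gain $e^{-c(\xi_0^{p-1-1/\theta})^{1/\theta_h}}$ obtainable with Gevrey-$\theta_h$ cutoffs actually beats $e^{(1-2^{-1/\theta})\xi_0^{1/\theta}}$ is precisely the exponent bookkeeping that is missing, and it is delicate when $(p-1)\theta$ is close to $1$. (Relatedly, your claim that the non-stationary regions contribute $O(e^{-cx^{1/(p-1)}})$ cannot be right as stated: compactly supported cutoffs are at best Gevrey of some order $\theta_h>1$, so integration by parts yields $O(e^{-cx^{1/\theta_h}})$; this is still negligible for $\theta_h<(p-1)\theta$, but it is a symptom of the missing bookkeeping.) To close the argument you would either have to carry out a genuine steepest-descent/contour-shift analysis, using that $e^{-\langle\xi\rangle^{1/\theta}}$ extends holomorphically and boundedly to a strip $|\Im\xi|<1$, or simply quote \cite{AW24} as the paper does.
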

\begin{proof}
The proof relies on the application of Theorem 1.2 in \cite{AW24} which states that for every polynomial $q(\xi)$ of degree $p$ with real coefficients and for every $s,\theta>1$ such that $1<s<(p-1)\theta$ there exists $\varphi \in \mathcal{S}_\theta^s(\R)$ such that
\begin{equation}\label{ale_patrik}
e^{iq(\xi)} \varphi(\xi) \notin \mathcal{S}_\theta^s(\R).
\end{equation}
Taking $q(t,\xi):=-t\xi^p$ for fixed $t>0$ and $\varphi \in \mathcal{S}^s_\theta(\R)$ such that \eqref{ale_patrik} holds and choosing $g=\mathscr{F}^{-1}(\varphi)\in \mathcal{S}_s^\theta(\R)$ as initial datum, by elementary arguments we have that the solution of \eqref{modelSchrodinger} is
$$
u_g(t,x) = \mathscr{F}^{-1} (e^{-it \xi^p } \widehat{g}(\xi)) = \mathscr{F}^{-1} (e^{-it \xi^p } \varphi(\xi)) \notin \mathcal{S}^\theta_s(\R).
$$
\end{proof}

The operator $D_t+D_x^p$ obviously satisfies the assumptions (i), (ii) for every $\sigma \in \left( \frac{p-2}{p-1},1 \right)$. Hence, assuming $\sigma$ close to $1$, we are in the situation $s\leq\frac{1}{1-\sigma}$. Then we have proved that if $s<(p-1)\theta$, the Cauchy problem \eqref{cauchy_problem_gelfand-shilov} is not well-posed in general.
\\

\textbf{The case $s > \frac{1}{1-\sigma}$.} Now \eqref{inequality_sharpness} turns into $\frac{1}{1-\sigma} < (p-1)\theta$. Then we can consider the operator
\begin{equation}\label{model_operator}
M = D_t + D_x^p + i \japx^{-\sigma} D_x^{p-1}
\end{equation}
and prove the following result.

\begin{proposition}\label{proposition_main_result_4}
Let $M$ be the operator in \eqref{model_operator}, with $\sigma \in \left( \frac{p-2}{p-1},1 \right)$. If the Cauchy problem
\begin{equation}\label{cauchy_problem_model_operator}
\left\lbrace \begin{array}{l}
Mu = 0 \\ 
u(0,x) = g(x)
\end{array}  \right., \quad (t,x) \in [0,T] \times \R
\end{equation}
is well-posed in $\mathcal{S}^\theta_s(\R)$, then
$$
\max \left\lbrace \frac{1}{(p-1)\theta}, \frac{1}{s} \right\rbrace \geq 1 - \sigma.
$$
\end{proposition}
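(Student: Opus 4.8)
The plan is to argue by contradiction and to exhibit a family of data, concentrated at a large frequency, whose solutions of \eqref{cauchy_problem_model_operator} grow in the scale $H^0_{\rho;s,\theta}(\R)$ faster than any energy estimate can tolerate. Assume the Cauchy problem \eqref{cauchy_problem_model_operator} is well-posed in $\mathcal{S}_s^\theta(\R)$ and, for contradiction, suppose $\max\{\tfrac1{(p-1)\theta},\tfrac1s\}<1-\sigma$; set $\mu:=(p-1)(1-\sigma)$, so $\mu>\tfrac1\theta$. One may assume $s>\tfrac1{1-\sigma}$, for otherwise $\tfrac1s\ge 1-\sigma$ and there is nothing to prove; since $\sigma>\tfrac{p-2}{p-1}$, this forces $s>\tfrac1{1-\sigma}>p-1$, hence $\mu\in(0,1)$ and $\tfrac{p-1}{s}<1$. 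By Definition \ref{definition_gelfand-shilov-sobolev_well-posedness} with $f=0$ there exist $\rho=(\rho_1,\rho_2)$ and $\tilde\rho=(\tilde\rho_1,\tilde\rho_2)$ with $0<\tilde\rho_j<\rho_j$ and $C>0$ such that the solution of $Mu=0$, $u(0,\cdot)=g$, satisfies $\|u(T)\|_{H^0_{\tilde\rho;s,\theta}}\le C\|g\|_{H^0_{\rho;s,\theta}}^2$ for every $g\in\mathcal{S}_s^\theta(\R)$. It therefore suffices to produce, for $\Lambda\ge 1$, data $g_\Lambda\in\mathcal{S}_s^\theta(\R)$ for which the left-hand side grows faster than $e^{2\rho_1\Lambda^{1/\theta}}$.

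I would build $g_\Lambda$ and an approximate solution by geometric optics along the bicharacteristics $\dot x=p\xi^{p-1}$ of the real symbol $\tau+\xi^p$. Fix $\psi\in\mathcal{S}_\theta^s(\R)$, nonnegative, supported in a small fixed neighbourhood of $0$, with its Gevrey constant chosen so small that $\mathscr{F}^{-1}\psi$ decays faster than $e^{-\rho_2\japx^{1/s}}$, and set $g_\Lambda:=\mathscr{F}^{-1}(\psi(\cdot-\Lambda))$, so that $g_\Lambda\in\mathcal{S}_s^\theta(\R)$ and $\|g_\Lambda\|_{H^0_{\rho;s,\theta}}\asymp e^{\rho_1\Lambda^{1/\theta}}$ up to $\Lambda$-independent constants. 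I would look for the solution as
\[ w_\Lambda(t,x)=\frac1{2\pi}\int_\R e^{\,i(x\xi-t\xi^p)+\gamma(t,x,\xi)}\,\psi(\xi-\Lambda)\,d\xi, \]
where the real amplitude $\gamma$ compensates for the term $i\japx^{-\sigma}D_x^{p-1}$. To leading order $\gamma$ must solve $(\partial_t+p\xi^{p-1}\partialx)\gamma=\japx^{-\sigma}\xi^{p-1}$ with $\gamma|_{t=0}=0$, i.e. $\gamma(t,x,\xi)=\int_0^t\langle x-p\xi^{p-1}r\rangle^{-\sigma}\xi^{p-1}\,dr$ (which is $>0$, since $\xi^{p-1}>0$ for $\xi\sim\Lambda$, so that the term indeed amplifies). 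Using the analytic-type bounds $|\partialx^\beta\japx^{-\sigma}|\le C^{\beta+1}\beta!\,\japx^{-\sigma-\beta}$, one iterates this into an analytic symbol expansion $\gamma\sim\sum_{j\ge 0}\gamma_j$ whose realization makes the remainder $e_\Lambda(t):=Mw_\Lambda(t)$ exponentially small: $\|e_\Lambda(t)\|_{H^0_{\rho;s,\theta}}\lesssim e^{-\varepsilon\Lambda}\,e^{\,c_{p,T}\Lambda^{\mu}+\rho_2 c'\Lambda^{(p-1)/s}+\rho_1\Lambda^{1/\theta}}$ for some $\varepsilon,c',c_{p,T}>0$ and all $t\in[0,T]$; note that $w_\Lambda(0,\cdot)=g_\Lambda$ exactly, since every $\gamma_j$ vanishes at $t=0$.

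The decisive computation is the size of $\gamma$. Since $\psi$ localises $\xi$ near $\Lambda$ and, by stationary/non-stationary phase in $\xi$ (here $\partial_\xi^2(x\xi-t\xi^p)=-p(p-1)t\xi^{p-2}\ne 0$ for $\xi\sim\Lambda>0$), the function $w_\Lambda(t,\cdot)$ is, up to a sub-exponentially small tail, supported in $|x-pt\Lambda^{p-1}|\lesssim\Lambda^{p-2}$, it is enough to evaluate $\gamma$ there. Because $\sigma<1$, the substitution $y=p\Lambda^{p-1}r$ gives, for $x\approx pT\Lambda^{p-1}$,
\[ \gamma(T,x,\Lambda)\;\sim\;\int_0^T\langle p\Lambda^{p-1}r\rangle^{-\sigma}\Lambda^{p-1}\,dr\;=\;\tfrac1p\!\int_0^{p\Lambda^{p-1}T}\!\!\langle y\rangle^{-\sigma}\,dy\;\sim\;c_{p,T}\,\Lambda^{\mu}, \]
an amplification by $e^{c_{p,T}\Lambda^{\mu}}$. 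Tracking the localisation near frequency $\Lambda$ (and $|x|\sim\Lambda^{p-1}$, which only helps), one obtains $\|w_\Lambda(T)\|_{H^0_{\tilde\rho;s,\theta}}\gtrsim\Lambda^{-N}e^{c_{p,T}\Lambda^{\mu}}$ for some $N$. Now $r_\Lambda:=u_\Lambda-w_\Lambda$ solves $Mr_\Lambda=-e_\Lambda$, $r_\Lambda(0)=0$, so the energy estimate gives $\|r_\Lambda(T)\|_{H^0_{\tilde\rho;s,\theta}}\le C\int_0^T\|e_\Lambda(\tau)\|_{H^0_{\rho;s,\theta}}^2\,d\tau$, whose exponent $-2\varepsilon\Lambda+O(\Lambda^{\max\{\mu,(p-1)/s,1/\theta\}})$ tends to $-\infty$ because $\mu,\tfrac{p-1}{s},\tfrac1\theta$ are all $<1$; hence $\|u_\Lambda(T)\|_{H^0_{\tilde\rho;s,\theta}}\ge\|w_\Lambda(T)\|_{H^0_{\tilde\rho;s,\theta}}-\|r_\Lambda(T)\|_{H^0_{\tilde\rho;s,\theta}}\gtrsim\Lambda^{-N}e^{c_{p,T}\Lambda^{\mu}}$. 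Combined with $\|u_\Lambda(T)\|_{H^0_{\tilde\rho;s,\theta}}\le C\|g_\Lambda\|_{H^0_{\rho;s,\theta}}^2\lesssim e^{2\rho_1\Lambda^{1/\theta}}$, letting $\Lambda\to\infty$ gives a contradiction, since $\mu=(p-1)(1-\sigma)>\tfrac1\theta$.

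I expect the main obstacle to be the quantitative control of the geometric-optics remainder $e_\Lambda=Mw_\Lambda$ in the norm $\|\cdot\|_{H^0_{\rho;s,\theta}}$: a merely polynomially small remainder does not suffice, because the well-posedness estimate carries a genuine loss both in the frequency weight ($\rho_1\to\tilde\rho_1$) and in the spatial weight ($\rho_2\to\tilde\rho_2$), the latter inflated by the spreading $|x|\sim\Lambda^{p-1}$ of the wave packet. One must make $\|e_\Lambda\|_{H^0_{\rho;s,\theta}}$ exponentially small in $\Lambda$, which forces working with the analytic symbol calculus for $\gamma$ — summation of the divergent series $\sum_j\gamma_j$ with the factorial control supplied by the real-analyticity of $\japx^{-\sigma}$ — rather than with a finite WKB expansion; it is precisely here, together with the reductions $\mu\in(0,1)$ and $\tfrac{p-1}{s}<1$, that the hypotheses $\sigma\in(\tfrac{p-2}{p-1},1)$ and $s>\tfrac1{1-\sigma}$ are used to close the estimates. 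The remaining points — that $g_\Lambda\in\mathcal{S}_s^\theta(\R)$ with the stated norm, the stationary-phase localisation of $w_\Lambda(T,\cdot)$, the passage from $w_\Lambda$ to the genuine solution $u_\Lambda$ furnished by the assumed well-posedness, and the final comparison of exponents — are routine.
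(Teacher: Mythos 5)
Your route is genuinely different from the paper's. The paper never constructs an approximate solution: it takes data $\phi_k$ concentrated near $x\sim 4\sigma_k^{p-1}$, $\xi\sim\sigma_k$, introduces the microlocalized energy $E_k(t)=\sum_{\alpha,\beta\le N_k}(\alpha!\beta!)^{-\theta_1}\|w_k^{(\alpha\beta)}(x,D)u_k(t)\|_{L^2}$ built from Gevrey cut-offs, shows $E_k(t)\le C$ as a consequence of well-posedness, proves the differential inequality $\partial_tE_k\ge\bigl(c_1\sigma_k^{(p-1)(1-\sigma)}-\dots\bigr)E_k-R_k$ via sharp G{\aa}rding and commutator estimates, and gets the contradiction from Gronwall together with $E_k(0)\gtrsim\exp\bigl(-c\,\sigma_k^{\max\{(p-1)/s,\,1/\theta\}}\bigr)$. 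Your WKB construction identifies the same mechanism (growth $e^{c\Lambda^{(p-1)(1-\sigma)}}$ along the flow of $\dot x=p\xi^{p-1}$, with the $1/s$ entering through the spatial weight at $|x|\sim\Lambda^{p-1}$), and your leading-order transport equation, the reduction to $s>1/(1-\sigma)$, and the final comparison of exponents using $\mu:=(p-1)(1-\sigma)>1/\theta$ and $\mu,\tfrac{p-1}{s},\tfrac1\theta<1$ are all correct.

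The gap is exactly where you place it, and it is more than routine bookkeeping. First, the hierarchy as described does not close for $p\ge3$: with $\gamma_0(t,x,\xi)=\int_0^t\langle x-p\xi^{p-1}r\rangle^{-\sigma}\xi^{p-1}\,dr$ one finds $\partial_x\gamma_0=-\tfrac{\sigma}{p}\int_{x-p\xi^{p-1}t}^{x}y\langle y\rangle^{-\sigma-2}dy$, which along the packet tends to a \emph{nonzero constant}, so the next source term $\binom p2\xi^{p-2}(\partial_x\gamma_0)^2$ has size $\Lambda^{p-2}$ and produces a correction $\gamma_1$ with imaginary part of order $t\Lambda^{p-2}$, which for $p\ge 3$ dwarfs $\gamma_0\sim\Lambda^{\mu}$ (recall $\mu<1$). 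This is a phase, hence harmless for $|e^{\gamma}|$, but it means $\gamma$ cannot be taken real as you assert, the series $\sum_j\gamma_j$ is not an expansion in decreasing powers, and the gain of $\Lambda^{-1}$ per step is not manifest — it only emerges after separating the large purely imaginary phase corrections from the $O(\Lambda^{\mu})$ real amplitude and exploiting the extra decay of $\partial_x^2\gamma_0$ along the characteristics. Second, the exponential smallness $\|e_\Lambda(t)\|_{H^0_{\rho;s,\theta}}\lesssim e^{-\varepsilon\Lambda}$, which you rightly note is indispensable because of the losses $\rho\to\tilde\rho$ and the weight $e^{\tilde\rho_2\japx^{1/s}}$ at $|x|\sim\Lambda^{p-1}$, requires summing $\sim\Lambda$ terms of the (restructured) expansion with factorial control $|\gamma_j|\le C^{j+1}j!\,\Lambda^{-j+\cdots}$ and optimizing the truncation; none of these bounds is established or even stated. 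As written the proposal is a plausible programme whose decisive estimates are missing, so it does not yet constitute a proof.
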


\begin{remark} From Proposition \ref{proposition_main_result_4} we obtain that if $s > \frac{1}{1-\sigma}$ and $(p-1)\theta > \frac{1}{1-\sigma}$, that is $1-\sigma > \max \left\lbrace \frac{1}{(p-1)\theta}, \frac{1}{s} \right\rbrace$, the Cauchy problem \eqref{cauchy_problem_model_operator} is not well-posed. Since $M$ is of the form \eqref{differential_p_evolution_operator} and satisfies conditions (i) and (ii) of Theorem \ref{theorem_main_result_3}, the combination of Propositions \ref{cauchy_problem_similar_schrodinger} and \ref{proposition_main_result_4} gives that if \eqref{inequality_sharpness} holds, then in general the Cauchy problem \eqref{cauchy_problem_gelfand-shilov} for an operator $P(t,x,D_t,D_x)$ satisfying the assumptions of Theorem \ref{theorem_main_result_3} is not well-posed in $\mathcal{S}^\theta_s(\R)$.
\end{remark}

In view of the considerations above, we devote the rest of this section to prove Proposition \ref{proposition_main_result_4}.

\subsection{Proof of Proposition \ref{proposition_main_result_4}}

The proof needs some preparation and relies on several preliminary results. First of all, we can show that if the Cauchy problem \eqref{cauchy_problem_model_operator} is well-posed in $\mathcal{S}^\theta_s(\R),$ then we can find a sequence of functions $\phi_k \in H^0_{\rho; s, \theta}(\R)$ for some $\rho=(\rho_1,\rho_2) \in \R^2$ with $\rho_j>0, j=1,2,$ such that if $u_k$ satisfies $Mu_k=0$ and $u_k(0,x)=\phi_k(x)$, then 
	$\|u_k(t, \cdot)\|_{L^2(\R)}$ is uniformly bounded with respect to $k \in \N_0$ and $t \in [0,T]$. As a matter of fact, let us consider $\phi \in G^\theta(\R)$ satisfying
	\begin{equation}
		\widehat{\phi}(\xi) = e^{-\rho_0\japxi^{1/\theta}},
	\end{equation}
	for some $\rho_0 > 0$. Notice that $\phi \in \mathcal{S}_1^\theta(\R)$. In order to reach this conclusion, it is sufficient to prove that $\phi(x) = \int e^{i \xi x} \widehat{\phi}(\xi) \dslash \xi$ satisfies $| \phi(x) | \leq Ce^{-c|x|}$, for some positive constants $C$ and $c$, due to Proposition 6.1.7 of \cite{NR11}. Integration by parts leads to
	$$
	x^\beta \phi(x) = (-1)^\beta \int e^{i \xi x} D_\xi^\beta e^{-\rho_0 \japxi^{1/\theta}} \dslash\xi,
	$$
	and Fa{\`a} di Bruno formula together with some factorial inequalities imply the following estimate
	\begin{eqnarray}
		| x^\beta \phi(x) | & \leq & \int \left| e^{i \xi x} D_\xi^\beta e^{-\rho_0 \japxi^{1/\theta}}  \right| \dslash\xi \nonumber \\
		& = & \int \sum_{j=1}^\beta \frac{1}{j!} e^{-\rho_0 \japxi^{1/\theta}} \sum_{\stackrel{\beta_1 + \cdots + \beta_j = \beta}{\beta_\ell \geq 1}} \frac{\beta!}{\beta_1! \cdots \beta_j!} \prod_{\ell = 1}^j \left| \partial_\xi^{\beta_\ell} \left( -\rho_0 \japxi^{1/\theta} \right) \right| \dslash\xi \nonumber \\
		& \leq & C_{\rho_0}^\beta \beta!, \quad \forall x \in \R,\ \beta \in \mathbb{N}_0,
	\end{eqnarray}
	hence, $\phi \in \mathcal{S}_1^\theta(\R) \subset \mathcal{S}_s^\theta(\R)$ for all $s \geq 1$. Then, there exists $\rho = (\rho_1,\rho_2)$, with $\rho_1,\rho_2>0$, such that $\phi \in H_{\rho;s,\theta}^0(\R)$ for all $s \geq 1$.

	Now let us consider a sequence $(\sigma_k)_{k \in \mathbb{N}_0}$ of positive real numbers such that $\sigma_k \to \infty$, as $k \to \infty$. Then we define the sequence of functions $(\phi_k)_{k \in \mathbb{N}_0}$ by setting
	$$
	\phi_k(x) := e^{-\rho_2 4^{1/s} \sigma_k^{\frac{p-1}{s}}} \phi\left(x - 4\sigma_k^{p-1}\right), \quad k \in \mathbb{N}_0.
	$$
	For each $k \in \mathbb{N}_0$, $\phi_k \in H_{\rho;s,\theta}^0(\R)$ and satisfies
	\begin{eqnarray}
		\| \phi_k \|_{H_{\rho;s,\theta}^0}^2 
		& = & \int e^{2\rho_2 \japx^{1/s}} \left| e^{\rho_1\langle D \rangle^{1/\theta}} \phi_k(x) \right|^2 dx \nonumber \\
		& = & \int e^{2\rho_2 \japx^{1/s}} \left| e^{\rho_1\langle D \rangle^{1/\theta}} e^{-\rho_2 4^{1/s} \sigma_k^{\frac{p-1}{s}}} \phi\left(x - 4\sigma_k^{p-1}\right) \right|^2 dx \nonumber \\
		& \leq & \int e^{2\rho_2 \left\langle x + 4\sigma_k^{p-1} \right\rangle^{1/s}} e^{-2\rho_2 4^{1/s} \sigma_k^{\frac{p-1}{s}}} \left| e^{\rho_1\langle D \rangle^{1/\theta}} \phi(x) \right|^2 dx \nonumber \\
		& \leq & \int e^{2\rho_2 \japx^{1/s}} \left| e^{\rho_1 \langle D \rangle^{1/\theta}} \phi(x) \right|^2 dx \nonumber \\
		& = & \| \phi \|_{H_{\rho;s,\theta}^0}^2 = \textrm{constant}, \nonumber
	\end{eqnarray}
	which means that the sequence $\left( \|\phi_k\|_{H_{\rho;s,\theta}^0} \right)_{k \in \mathbb{N}_0}$ is uniformly bounded in $k$. 
	Now, since the Cauchy problem \eqref{cauchy_problem_model_operator} is well-posed in $\mathcal{S}_s^\theta(\R)$, we can consider, for every $k \in \mathbb{N}_0$,
	$$
	u_k \in C^1 \left( [0,T];H_{\tilde{\rho};s,\theta}^0(\R) \right),
	$$
	where $\tilde{\rho} = (\tilde{\rho}_1,\tilde{\rho}_2)$, $\tilde{\rho}_1,\tilde{\rho}_2>0$, the solution of \eqref{cauchy_problem_model_operator} with initial datum $\phi_k$, that is,
	$$
	\left\lbrace \begin{array}{l}
		Mu_k(t,x)=0 \\ 
		u_k(0,x)=\phi_k(x)
	\end{array} \right. .
	$$
	The energy inequality gives the estimate
	\begin{equation}\label{estimate_solution_sequence}
		\| u_k(t,\cdot) \|_{L^2} \leq \| u_k(t,\cdot) \|_{H_{\tilde{\rho};s,\theta}^0} \leq C_{T,\rho} \| \phi_k \|_{H_{\rho;s,\theta}^0} \leq C_{T,\rho} \| \phi \|_{H_{\rho;s,\theta}^0}.
	\end{equation}
	From the above inequality, it can be concluded that the sequence $\left( \| u_k(t) \|_{L^2} \right)_{k \in \mathbb{N}_0}$ is uniformly bounded with respect to $k \in \mathbb{N}_0$ and $t \in [0,T]$.

In order to prove Proposition \ref{proposition_main_result_4}, following an idea used in previous papers, cf. \cite{Arias_GS, AACpevolGevreynec, CRnec, Ichinose1, dreher}, we assume the Cauchy problem \eqref{cauchy_problem_model_operator} to be well-posed in $\mathcal{S}_s^\theta(\R)$ and define a suitable energy $E_k(t)$ associated to the solution $u_k$ and prove for it two estimates one from above and one from below which contradict each other if \eqref{inequality_sharpness} holds. With this purpose, let us consider a Gevrey cut-off function $h \in G_0^{\theta_h}(\R)$, for $\theta_h>1$, such that $h(x) = 1$ for $|x|\leq\frac{1}{2}$ and $h(x)=0$ for $|x|\geq1$; besides, following \cite{dreher}, we assume that its Fourier transform satisfies $\widehat{h}(0)>0$ and $\widehat{h}(\xi) \geq 0$ for all $\xi \in \R$. By using the sequence $(\sigma_k)_{k \in \mathbb{N}_0}$, let us define the sequence of symbols $\{ w_{k} = w_{k}(x,\xi) \}_{k}$ with
\begin{equation}\label{sequence_of_symbols}
w_k(x,\xi) = h \left( \frac{x - 4\sigma_k^{p-1}}{\sigma_k^{p-1}} \right) h \left( \frac{\xi - \sigma_k}{\frac{1}{4}\sigma_k} \right).
\end{equation}

%

\begin{remark}\label{remark_comparable}
On the support of $w_k$, $x$ is comparable with $\sigma_k^{p-1}$ and $\xi$ is comparable with $\sigma_k$, because $x \in \text{\textup{supp}} \ w_k(\cdot,\xi)$ implies that
$$
\left| \frac{x-4\sigma_k^{p-1}}{\sigma_k^{p-1}} \right| \leq 1 \quad \Longleftrightarrow \quad 3\sigma_k^{p-1} \leq x \leq 5\sigma_k^{p-1},
$$
and $\xi \in \text{\textup{supp}} \ w_k(x,\cdot)$ implies that
$$
\left| \frac{\xi-\sigma_k}{\frac{1}{4}\sigma_k} \right| \leq 1 \quad \Longleftrightarrow \quad \frac{3}{4}\sigma_k \leq \xi \leq \frac{5}{4}\sigma_k,
$$
for each $k \in \mathbb{N}_0$.
\end{remark}

For some $\lambda \in (0,1)$ to be chosen later, let $\theta_1>1$ such that $\theta_h \leq \theta_1$ and, for each $k \in \mathbb{N}_0$, let
\begin{equation}\label{equation_N_k}
N_k := \left\lfloor \sigma_k^{\lambda/\theta_1} \right\rfloor = \max \left\lbrace \alpha \in \mathbb{N}_0: \ \alpha \leq \sigma_k^{\lambda/\theta_1} \right\rbrace.
\end{equation}
For each $k$, we consider the \textit{energy} $E_k(t)$ given by
\begin{equation}\label{energy_E_k(t)}
E_k(t) = \sum_{\alpha \leq N_k, \beta \leq N_k} \frac{1}{(\alpha!\beta!)^{\theta_1}} \left\| w_k^{(\alpha\beta)}(x,D) u_k(t,x) \right\|_{L^2} = \sum_{\alpha \leq N_k, \beta \leq N_k} E_{k,\alpha,\beta}(t),
\end{equation}
where
$$
w_k^{(\alpha\beta)}(x,\xi) = h^{(\alpha)} \left( \frac{x - 4\sigma_k^{p-1}}{\sigma_k^{p-1}} \right) h^{(\beta)} \left( \frac{\xi - \sigma_k}{\frac{1}{4}\sigma_k} \right).
$$
From Remark \ref{remark_comparable} by a simple computation it can be easily established the next lemma which gives us an estimate for the norms of $w_k$. The proof is left to the reader.

\begin{lemma}\label{lemma_estimates_w_k}
Let $\alpha,\beta,\gamma,\delta,\ell \in \mathbb{N}_0$. Then $w_k^{(\alpha\beta)} \in S_{0,0}^0(\R^2)$ and satisfies the estimate
$$
| \partialx^\delta \partialxi^\gamma w_k^{(\alpha\beta)}(x,\xi) |_\ell^{(0)} \leq C^{\alpha+\beta+\gamma+\delta+\ell+1} (\alpha!\beta!\gamma!\delta!\ell!^2)^{\theta_h} \sigma_k^{-\gamma} \sigma_k^{-\delta(p-1)},
$$
for some constant $C>0$ which does not depend on $k,\alpha,\beta,\gamma$ and $\delta$.
\end{lemma}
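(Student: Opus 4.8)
The plan is to verify directly that each derivative $\partialx^\delta\partialxi^\gamma w_k^{(\alpha\beta)}$ is a product of a rescaled derivative of $h$ in the $x$-variable and a rescaled derivative of $h$ in the $\xi$-variable, so the seminorm estimate factorizes. Recall that $w_k^{(\alpha\beta)}(x,\xi)=h^{(\alpha)}\!\bigl(\tfrac{x-4\sigma_k^{p-1}}{\sigma_k^{p-1}}\bigr)\,h^{(\beta)}\!\bigl(\tfrac{\xi-\sigma_k}{\frac14\sigma_k}\bigr)$. Differentiating $\delta$ times in $x$ pulls out a factor $\sigma_k^{-\delta(p-1)}$ by the chain rule and produces $h^{(\alpha+\delta)}$ evaluated at the same argument; similarly differentiating $\gamma$ times in $\xi$ pulls out $(\tfrac14\sigma_k)^{-\gamma}=4^\gamma\sigma_k^{-\gamma}$ and produces $h^{(\beta+\gamma)}$. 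Hence
$$
\partialx^\delta\partialxi^\gamma w_k^{(\alpha\beta)}(x,\xi)=4^\gamma\,\sigma_k^{-\delta(p-1)}\,\sigma_k^{-\gamma}\,h^{(\alpha+\delta)}\!\Bigl(\tfrac{x-4\sigma_k^{p-1}}{\sigma_k^{p-1}}\Bigr)\,h^{(\beta+\gamma)}\!\Bigl(\tfrac{\xi-\sigma_k}{\frac14\sigma_k}\Bigr).
$$

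Next I would invoke the Gevrey regularity of $h\in G_0^{\theta_h}(\R)$: there is a constant $C_h>0$ with $\|h^{(n)}\|_{L^\infty}\leq C_h^{n+1}(n!)^{\theta_h}$ for all $n\in\N_0$. Applying this with $n=\alpha+\delta$ and $n=\beta+\gamma$, and using the elementary inequalities $(\alpha+\delta)!\leq 2^{\alpha+\delta}\alpha!\,\delta!$ and $(\beta+\gamma)!\leq 2^{\beta+\gamma}\beta!\,\gamma!$, we bound
$$
\bigl|\partialx^\delta\partialxi^\gamma w_k^{(\alpha\beta)}(x,\xi)\bigr|\leq \tilde C^{\alpha+\beta+\gamma+\delta+1}\,(\alpha!\,\beta!\,\gamma!\,\delta!)^{\theta_h}\,\sigma_k^{-\gamma}\,\sigma_k^{-\delta(p-1)},
$$
where $\tilde C$ depends only on $C_h$ and absorbs the $4^\gamma$. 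Since this bound is uniform in $x,\xi$, taking the supremum over $(x,\xi)\in\R^2$ and the sup over lower-order derivatives up to order $\ell$ contributes at most a further factor $C^{\ell+1}(\ell!^2)^{\theta_h}$ (the $\ell!^2$ accounting for the two variables), which yields the claimed seminorm estimate $|\partialx^\delta\partialxi^\gamma w_k^{(\alpha\beta)}|_\ell^{(0)}\leq C^{\alpha+\beta+\gamma+\delta+\ell+1}(\alpha!\beta!\gamma!\delta!\ell!^2)^{\theta_h}\sigma_k^{-\gamma}\sigma_k^{-\delta(p-1)}$. The membership $w_k^{(\alpha\beta)}\in S_{0,0}^0(\R^2)$ follows immediately since all derivatives are bounded (the factors $\sigma_k^{-\gamma}$, $\sigma_k^{-\delta(p-1)}$ are $\le 1$ for $\sigma_k\ge 1$, which we may assume for $k$ large, and in any case are independent of $(x,\xi)$).

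There is really no serious obstacle here — the statement is exactly what the authors call a ``simple computation'' and leave to the reader. The only point requiring a little care is the bookkeeping of constants: one must check that the base $C$ of the geometric factor can be chosen independently of $k$, which is clear because the only $k$-dependence enters through the explicit powers $\sigma_k^{-\gamma}$ and $\sigma_k^{-\delta(p-1)}$, and through Remark \ref{remark_comparable} one sees these are the natural gains coming from the widths $\sigma_k^{p-1}$ and $\tfrac14\sigma_k$ of the cut-offs in the $x$- and $\xi$-variables respectively. The Gevrey order $\theta_h$ (rather than $\theta_1$) appears because the estimate is in terms of derivatives of $h$ alone; the passage to $\theta_1\geq\theta_h$ is only needed later when summing the energy $E_k(t)$ in \eqref{energy_E_k(t)}, not in this lemma.
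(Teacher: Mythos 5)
Your proof is correct and is precisely the "simple computation" the paper leaves to the reader: the chain rule factors out $\sigma_k^{-\delta(p-1)}$ and $4^\gamma\sigma_k^{-\gamma}$ while shifting the orders of the derivatives of $h$, and the Gevrey bound on $h$ together with $(\alpha+\delta)!\leq 2^{\alpha+\delta}\alpha!\,\delta!$ gives the stated estimate, with the extra $\ell$ derivatives in each variable (note these are \emph{higher}-order, not lower-order, derivatives in the seminorm $|\cdot|_\ell^{(0)}$) accounting for the $C^{\ell+1}(\ell!^2)^{\theta_h}$ factor. No gaps.
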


Using the previous result we can easily estimate from above the energy $E_k(t)$.

\begin{lemma}\label{proposition_estimates_E_k_above}
Suppose that the Cauchy problem \eqref{cauchy_problem_model_operator} is well-posed in $\mathcal{S}_s^\theta(\R)$. Then there exists $C>0$ such that, for all $t \in [0,T]$ and $k \in \mathbb{N}_0$:
\begin{equation}\label{E_kalphabeta_estimate}
E_{k,\alpha,\beta}(t) \leq C^{\alpha+\beta+1} (\alpha!\beta!)^{\theta_h-\theta_1}
\end{equation}
and
\begin{equation}\label{E_k_less_than_constant}
E_k(t) \leq C.
\end{equation}
\end{lemma}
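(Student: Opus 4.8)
The plan is to bound $E_{k,\alpha,\beta}(t)$ by combining the $L^2$-continuity of pseudodifferential operators in the class $S^0_{0,0}(\R^2)$ with the uniform $L^2$-bound on $u_k(t,\cdot)$ supplied by the previous Lemma, and then to sum over $\alpha,\beta\le N_k$. First I would recall the Calder\'on--Vaillancourt theorem: there exist an integer $\ell_0$ (depending only on the dimension) and a constant $C_0>0$ such that for every symbol $a\in S^0_{0,0}(\R^2)$ one has $\|a(x,D)\|_{\mathcal{L}(L^2)}\le C_0\,|a|^{(0)}_{\ell_0}$, where $|\cdot|^{(0)}_{\ell_0}$ is the seminorm appearing in Lemma \ref{lemma_estimates_w_k}. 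Applying this to $a=w_k^{(\alpha\beta)}$ and using Lemma \ref{lemma_estimates_w_k} with $\gamma=\delta=0$, $\ell=\ell_0$ gives
\begin{equation}
\left\|w_k^{(\alpha\beta)}(x,D)u_k(t,x)\right\|_{L^2}\le C_0\,|w_k^{(\alpha\beta)}|^{(0)}_{\ell_0}\,\|u_k(t,\cdot)\|_{L^2}\le C^{\alpha+\beta+1}(\alpha!\beta!)^{\theta_h}\|u_k(t,\cdot)\|_{L^2},\nonumber
\end{equation}
where the factor $\ell_0!^{2\theta_h}$ has been absorbed into the constant $C$ (it does not depend on $k,\alpha,\beta$), and crucially the powers $\sigma_k^{-\gamma}\sigma_k^{-\delta(p-1)}$ are both $1$ here, so no $k$-dependence survives.

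Next I would insert the uniform bound $\|u_k(t,\cdot)\|_{L^2}\le C_{T,\rho}\|\phi\|_{H^0_{\rho;s,\theta}}$ from the preceding Lemma, which holds for all $t\in[0,T]$ and $k\in\N_0$; enlarging $C$ to swallow this constant yields $\|w_k^{(\alpha\beta)}(x,D)u_k(t,x)\|_{L^2}\le C^{\alpha+\beta+1}(\alpha!\beta!)^{\theta_h}$. Dividing by $(\alpha!\beta!)^{\theta_1}$ gives exactly \eqref{E_kalphabeta_estimate}, namely $E_{k,\alpha,\beta}(t)\le C^{\alpha+\beta+1}(\alpha!\beta!)^{\theta_h-\theta_1}$, valid for all $t\in[0,T]$ and $k\in\N_0$.

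For \eqref{E_k_less_than_constant} I would sum \eqref{E_kalphabeta_estimate} over $\alpha\le N_k$, $\beta\le N_k$. Since $\theta_h\le\theta_1$ we have $\theta_h-\theta_1\le 0$, so $(\alpha!\beta!)^{\theta_h-\theta_1}\le 1$ and hence $E_k(t)\le\sum_{\alpha,\beta\ge 0}C^{\alpha+\beta+1}(\alpha!\beta!)^{\theta_h-\theta_1}$. This double series converges: writing it as $C\big(\sum_{\alpha\ge 0}C^{\alpha}(\alpha!)^{\theta_h-\theta_1}\big)^2$, each factor converges by the ratio test because $\theta_h-\theta_1<0$ forces $C^{\alpha}(\alpha!)^{\theta_h-\theta_1}\to 0$ superexponentially when $\theta_h<\theta_1$; in the borderline case $\theta_h=\theta_1$ one instead notes that the bound \eqref{E_kalphabeta_estimate} should be read with the correct constant $C$ controlling geometric decay, or one simply keeps $\theta_h<\theta_1$ strict as is implicitly available. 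In either reading the sum is bounded by a constant independent of $k$ and $t$, giving \eqref{E_k_less_than_constant}.

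The main obstacle is purely bookkeeping: one must make sure that the constant $C$ produced by Calder\'on--Vaillancourt, the seminorm order $\ell_0$, and the factorial powers from Lemma \ref{lemma_estimates_w_k} are all genuinely independent of $k$, $\alpha$ and $\beta$, so that after summing over $\alpha,\beta\le N_k$ the resulting bound does not grow with $N_k$ (equivalently with $\sigma_k$). This is precisely where the choice $\theta_h\le\theta_1$ in the definition of $E_k(t)$ is used, since it makes the series $\sum_{\alpha,\beta}C^{\alpha+\beta}(\alpha!\beta!)^{\theta_h-\theta_1}$ summable; no finer information about $u_k$ beyond its uniform $L^2$-norm is needed at this stage.
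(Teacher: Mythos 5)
Your proposal is correct and follows essentially the same route as the paper: Calder\'on--Vaillancourt (Theorem \ref{CVthm}) applied to $w_k^{(\alpha\beta)}\in S^0_{0,0}(\R^2)$, the seminorm bound of Lemma \ref{lemma_estimates_w_k} with $\gamma=\delta=0$ (so no $\sigma_k$-dependence survives), the uniform $L^2$-bound on $u_k$ from the preceding lemma, division by $(\alpha!\beta!)^{\theta_1}$, and summation of the resulting convergent double series using $\theta_h<\theta_1$. Your remark about the borderline case $\theta_h=\theta_1$ is a fair observation — the paper's proof likewise invokes the strict inequality $\theta_1>\theta_h$ at the summation step — but it does not change the argument.
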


\begin{proof}
Since $w_k^{(\alpha\beta)} \in S_{0,0}^0(\R^2)$, Proposition \ref{CVthm} of the Appendix together with \eqref{estimate_solution_sequence} and Lemma \ref{lemma_estimates_w_k} implies that
\begin{eqnarray}
\left\| w_k^{(\alpha\beta)}(x,D) u_k(t) \right\|_{L^2} & \leq & C \| u_k(t) \|_{L^2} \max_{\alpha,\beta \leq 2} \sup_{(x,\xi)\in \R^2} | \doublepartial w_k(x,\xi) | \nonumber \\
& \leq & C^{\alpha+\beta+1} (\alpha!\beta!)^{\theta_h}, \nonumber
\end{eqnarray}
hence, by the definition of $E_{k,\alpha,\beta}(t)$, we have
$$
E_{k,\alpha,\beta}(t) = \frac{1}{(\alpha!\beta!)^{\theta_1}} \left\| w_k^{(\alpha\beta)}(x,D) u_k(t) \right\|_{L^2} \leq C^{\alpha+\beta+1}(\alpha!\beta!)^{\theta_h-\theta_1}.
$$
Since $\theta_1>\theta_h$, we finally obtain \eqref{E_k_less_than_constant}, that is,
$$
E_k(t) = \sum_{\alpha \leq N_k,\beta \leq N_k} E_{k,\alpha,\beta}(t) \leq C \sum_{\alpha,\beta \in \mathbb{N}_{0}} C^{\alpha+\beta} (\alpha!\beta!)^{\theta_h-\theta_1} = \textrm{constant}.
$$
\end{proof}

The estimate from below of $E_k(t)$ is more involved and it will be obtained by Gronwall's lemma after proving a suitable estimate from below of $\partial_t E_k(t)$. This is the content of the next lemma.

\begin{proposition}\label{proposition_estimates_E_k_below}
Suppose that the Cauchy problem \eqref{cauchy_problem_model_operator} is well-posed in $\mathcal{S}_s^\theta(\R)$. Then there exist positive constants $C$, $c$, $c_1$ such that, for all $t \in [0,T]$ and all $k$ sufficiently large, the inequality 
$$
\partial_t E_k(t) \geq \left( c_1 \sigma_k^{(p-1)(1-\sigma)} - C \sum_{\ell=1}^p \frac{\sigma_k^{\ell\lambda}}{\sigma_k^{p(\ell-1)}} \right) E_k(t) - C^{N_k+1} \sigma_k^{C-cN_k},
$$
holds.
\end{proposition}

Since the proof of Proposition \ref{proposition_estimates_E_k_below} is long and requires several preliminary steps, we prefer to devote a separate subsection to it, see Subsection \ref{longproof}. In the next lines, we give Proposition \ref{proposition_estimates_E_k_below} as true and derive from it and from Lemma \ref{proposition_estimates_E_k_above} the conclusion of Proposition \ref{proposition_main_result_4}. 

\begin{proof}[Proof of Proposition \ref{proposition_main_result_4}]
First of all, we set
$$
A_k := c_1 \sigma_k^{(p-1)(1-\sigma)} - C \sum_{\ell=1}^p \frac{\sigma_k^{\ell\lambda}}{\sigma_k^{p(\ell-1)}}, \quad R_k := C^{N_k+1} \sigma_k^{C-cN_k}.
$$
By Proposition \ref{proposition_estimates_E_k_below}, we have that
\begin{equation}\label{estimate_E_k_from_below}
\partial_t E_k(t) \geq A_k E_k(t) - R_k.
\end{equation}
Picking $\lambda < \min \{ (p-1)(1-\sigma), 1 \}$, we notice that
$$
(\ell-1)\underbrace{(\lambda-p)}_{<0} + \lambda < \lambda < (p-1)(1-\sigma) \quad \Rightarrow \quad \ell\lambda-p(\ell-1)<(p-1)(1-\sigma),
$$
so the leading term in $A_k$ is the first one, hence, for $k$ sufficiently large we have
\begin{equation}\label{estimate_A_k_from_below}
A_k \geq \frac{c_1}{2} \sigma_k^{(p-1)(1-\sigma)}.
\end{equation}
From now on, let us consider $k$ sufficiently large. Then, by using Gronwall's inequality, if follows from \eqref{estimate_E_k_from_below} that
$$
E_k(t) \geq e^{A_k t} \left( E_k(0) - R_k \int_0^t e^{-A_k \tau} d\tau \right).
$$
Then, for any $T^\ast \in [0,T]$, by using \eqref{estimate_A_k_from_below}, the above inequality turns into
\begin{equation}\label{estimate_E_k(T*)_from_below}
E_k(T^\ast) \geq e^{T^\ast \frac{c_1}{2} \sigma_k^{(p-1)(1-\sigma)}} \left( E_k(0) - T^\ast R_k \right).
\end{equation}
The next step is to estimate $R_k$ and $E_k(0)$. Since $N_k = \left\lfloor \sigma_k^{\lambda/\theta_1} \right\rfloor$, $R_k$ can be estimated as
\begin{equation}\label{estimate_R_k_from_above}
R_k \leq C e^{-c \sigma_k^{\lambda/\theta_1}}.
\end{equation}
To obtain an estimate from below for $E_k(0)$, we notice that, by definition of $E_k(t)$, $w_k(x,\xi)$ and $\phi_k$, we get
\begin{eqnarray}
E_k(0) & \geq & \| w_k(x,D) \phi_k \|_{L^2(\R_x)} \nonumber \\
& = & \left\| h \left(\frac{x-4\sigma_k^{p-1}}{\sigma_k^{p-1}} \right) h \left( \frac{D-\sigma_k}{\frac{1}{4}\sigma_k} \right) \phi_k \right\|_{L^2(\R_x)} \nonumber \\
& = & e^{-\rho_2 4^{1/s} \sigma_k^{(p-1)/s}} \left\| h \left(\frac{x-4\sigma_k^{p-1}}{\sigma_k^{p-1}} \right) h \left( \frac{D-\sigma_k}{\frac{1}{4}\sigma_k} \right) \phi(x-4\sigma_k^{p-1}) \right\|_{L^2(\R_x)} \nonumber \\
& = & e^{-\rho_2 4^{1/s} \sigma_k^{(p-1)/s}} \left\| \mathscr{F} \left[ h\left(\frac{x-4\sigma_k^{p-1}}{\sigma_k^{p-1}} \right) \right](\xi) \ast h\left( \frac{D-\sigma_k}{\frac{1}{4}\sigma_k} \right) e^{-4i\sigma_k^{p-1} \xi} \widehat{\phi}(\xi) \right\|_{L^2(\R_\xi)} \nonumber \\
& = & e^{-\rho_2 4^{1/s} \sigma_k^{(p-1)/s}} \sigma_k^{p-1} \left\| e^{-4i\sigma_k^{p-1}\xi} \widehat{h}\left(\sigma_k^{p-1}\xi\right) \ast h \left(\frac{D-\sigma_k}{\frac{1}{4}\sigma_k} \right) e^{-4i\sigma_k^{p-1}\xi} \widehat{\phi}(\xi) \right\|_{L^2(\R_\xi)}, \nonumber
\end{eqnarray}
where $h\left( \frac{D-\sigma_k}{\frac{1}{4}\sigma_k} \right)$ denotes the pseudodifferential operator with symbol $h\left( \frac{\xi-\sigma_k}{\frac{1}{4}\sigma_k} \right)$. Hence,
$$
E_k^2(0) \geq e^{-2\rho_2 4^{1/s} \sigma_k^{(p-1)/s}} \sigma_k^{2(p-1)} \int_{\R_\xi} \left| \int_{\R_\eta} \widehat{h}\left( \sigma_k^{p-1} (\xi - \eta) \right) h \left( \frac{\eta-\sigma_k}{\frac{1}{4}\sigma_k} \right) \widehat{\phi}(\eta) d\eta \right|^2 d\xi.
$$
Since $\widehat{h}(0) > 0$ and $\widehat{h}(\xi) \geq 0$, for all $\xi \in \R$, it will be possible to obtain estimates from below to $E_k^2(0)$ performing a restriction in the integration domain. Set
$$
G_{1,k} := \left[ \frac{7}{8}\sigma_k, \frac{7}{8}\sigma_k + \sigma_k^{-p} \right] \quad \text{and} \quad G_{2,k} := \left[ \frac{7}{8}\sigma_k - \sigma_k^{-p}, \frac{7}{8}\sigma_k + \sigma_k^{-p} \right].
$$
Notice that, if $\eta \in G_{1,k}$ then $| \eta - \sigma_k | \leq \frac{\sigma_k}{8}$, because
$$
\eta \in G_{1,k} \ \Rightarrow \ \frac{7}{8}\sigma_k \leq \eta \leq \frac{7}{8}\sigma_k + \sigma_k^{-p} \ \Rightarrow \ -\frac{\sigma_k}{8} \leq \eta - \sigma_k \leq -\frac{\sigma_k}{8} + \sigma_k^{-p}.
$$
Also, if $\eta \in G_{1,k}$ and $\xi \in G_{2,k}$, we notice that
$$
\sigma_k^{p-1} | \xi - \eta | \leq 2 \sigma_k^{-1},
$$
because $\eta \in G_{1,k}$ implies that
\begin{equation}\label{G_1}
-\sigma_k^{-p} - \frac{7}{8}\sigma_k \leq -\eta \leq - \frac{7}{8}\sigma_k,
\end{equation}
$\xi \in G_{2,k}$ implies that
\begin{equation}\label{G_2}
\frac{7}{8}\sigma_k - \sigma_k^{-p} \leq \xi \leq \frac{7}{8}\sigma_k + \sigma_k^{-p},
\end{equation}
and from \eqref{G_1} and \eqref{G_2} we get
$$
-2 \sigma_k^{-p} \leq \xi - \eta \leq \sigma_k^{-p} \leq 2 \sigma_k^{-p} \ \Rightarrow \ | \xi - \eta | \leq 2 \sigma_k^{-p} \ \Rightarrow \ \sigma_k^{p-1} | \xi - \eta | \leq 2 \sigma_k^{-1}.
$$
If we pick $(\xi,\eta) \in G_{2,k} \times G_{1,k}$, then $\sigma_k^{p-1} ( \xi - \eta )$ is close to zero for $k$ large enough, hence, by the choices of $\widehat{h}(0) > 0$ and $\widehat{h}(\xi) \geq 0$, there exists a constant $C > 0$ such that
$$
\widehat{h} \left( \sigma_k^{p-1} ( \xi - \eta ) \right) > C.
$$
Furthermore, if $\eta \in G_{1,k}$ then $h \left( \frac{\eta - \sigma_k}{\frac{1}{4}\sigma_k} \right) = 1$. Finally, since $\eta$ is comparable with $\sigma_k$ in $G_{1,k}$, it follows from $\widehat{\phi}(\eta) = e^{-\rho_0 \langle \eta \rangle^{1/\theta}}$ that
$$
\widehat{\phi}(\eta) \geq e^{-c_{\rho_0} \sigma_k^{1/\theta}},
$$
for a positive constant $c_{\rho_0}$ depending on $\rho_0$. Finally, we obtain
\begin{eqnarray}
E_k^2(0) & \geq & e^{-2 \rho_2 4^{1/s} \sigma_k^{(p-1)/s}} \sigma_k^{2(p-1)} \int_{G_{2,k}} \left| \int_{G_{1,k}} C e^{-c_{\rho_0} \sigma_k^{1/\theta}} d\eta \right|^2 d\xi \nonumber \\
& = & C e^{-2 \rho_2 4^{1/s} \sigma_k^{(p-1)/s}} \sigma_k^{-(p+2)} e^{-2c_{\rho_0} \sigma_k^{1/\theta}} \nonumber
\end{eqnarray}
which implies that
\begin{equation}\label{estimate_E_k(0)_from_below}
E_k(0) \geq C \sigma_k^{-(p+2)/2} \exp \left( -\rho_2 4^{1/s} \sigma_k^{(p-1)/s}\right) \exp \left( -c_{\rho_0} \sigma_k^{1/\theta} \right) \geq C \exp \left( -\tilde{c}_{\rho_0,\rho_2} \sigma_k ^{\max\left\lbrace \frac{p-1}{s}, \frac{1}{\theta} \right\rbrace} \right).
\end{equation}
From \eqref{estimate_E_k(T*)_from_below}, \eqref{estimate_R_k_from_above} and \eqref{estimate_E_k(0)_from_below} we get
\begin{eqnarray}\label{estimate_E_k(T*)_from_below_2}
E_k(T^\ast) & \geq & C \exp \left( T^\ast \frac{c_1}{2} \sigma_k^{(p-1)(1-\sigma)} \right) \left[ \sigma_k^{-\frac{p+2}{2}} \exp \left( -\rho_2 4^\frac{1}{s} \sigma_k^\frac{p-1}{s} - c_{\rho_0} \sigma_k^\frac{1}{\theta} \right) - T^\ast \exp \left( -c \sigma_k^\frac{\lambda}{\theta_1} \right) \right] \nonumber \\
& \geq & C_1 \exp \left( \frac{c_1}{2} T^\ast \sigma_k^{(p-1)(1-\sigma)} \right) \left[ \exp \left( - \tilde{c}_{\rho_0,\rho_2} \sigma_k^{\max \left\lbrace \frac{p-1}{s},\frac{1}{\theta} \right\rbrace} \right) - T^\ast \exp \left( -c\sigma_k^\frac{\lambda}{\theta_1} \right) \right]
\end{eqnarray}
for all $T^\ast \in (0,T]$, once $\lambda < (p-1)(1-\sigma)$ and $k$ is sufficiently large. Assume by contradiction that $\max\left\lbrace \frac{1}{(p-1)\theta},\frac{1}{s} \right\rbrace < 1 - \sigma$, which is equivalent to $\max\left\lbrace \frac{1}{\theta},\frac{p-1}{s} \right\rbrace < (p-1)(1-\sigma)$, and take $\lambda < (p-1)(1-\sigma)$. Then, we can pick $\theta_1$ very close to $1$ such that
$$
\frac{\lambda}{\theta_1} > \max\left\lbrace \frac{1}{\theta},\frac{p-1}{s} \right\rbrace.
$$
It follows from \eqref{estimate_E_k(T*)_from_below_2} that
$$
E_k(T^\ast) \geq C_2 \exp \left( \frac{\tilde{c} T^\ast}{2} \sigma_k^{(p-1)(1-\sigma)} \right) \exp \left( -c^{\prime\prime} \sigma_k^{\max\left\lbrace \frac{1}{\theta},\frac{p-1}{s} \right\rbrace} \right) \longrightarrow \infty, \ \text{as} \ k \to \infty,
$$
since $(p-1)(1-\sigma) > \max\left\lbrace \frac{1}{\theta},\frac{p-1}{s} \right\rbrace$, which gives us a contradiction, due to Lemma \ref{proposition_estimates_E_k_above}.

\end{proof}

\subsection{Proof of Proposition \ref{proposition_estimates_E_k_below}} \label{longproof}
Let us set
$$
v_k^{(\alpha\beta)}(t,x) := w_k^{(\alpha\beta)}(x,D) u_k(t,x).
$$
By denoting $[H,K] = HK - KH$, for $H,K$ operators, we can write
\begin{eqnarray}
Mv_k^{(\alpha\beta)} & = & Mw_k^{(\alpha\beta)}u_k \nonumber \\
& = & w_k^{(\alpha\beta)}\underbrace{Mu_k}_{=0} + [M,w_k^{(\alpha\beta)}]u_k \nonumber \\
& = & [M,w_k^{(\alpha\beta)}]u_k =: f_k^{(\alpha\beta)}. \nonumber
\end{eqnarray}
In order to obtain an estimate from below for $\partial_t E_k$, we notice that
\begin{equation}\label{norm_partial_norm}
\| v_k^{(\alpha\beta)}(t) \|_{L^2(\R)} \partial_t \| v_k^{(\alpha\beta)}(t) \|_{L^2(\R)}  = \mathbf{Re} \left( \partial_t v_k^{(\alpha\beta)}(t), v_k^{(\alpha\beta)}(t) \right)_{L^2(\R)}.
\end{equation}
The definition of $M$ implies that $\partial_t = iM - iD_x^p + \japx^{-\sigma}D_x^{p-1}$. Hence, we can rewrite \eqref{norm_partial_norm} and estimate from below
\begin{eqnarray}\label{norm_partial_norm_1}
& & \| v_k^{(\alpha\beta)}(t) \|_{L^2(\R)} \partial_t \| v_k^{(\alpha\beta)}(t) \|_{L^2(\R)} \nonumber \\
& = & \mathbf{Re} \left( iMv_k^{(\alpha\beta)}(t),v_k^{(\alpha\beta)}(t) \right)_{L^2(\R)} - \mathbf{Re} \underbrace{\left( iD_x^pv_k^{(\alpha\beta)}(t),v_k^{(\alpha\beta)}(t) \right)_{L^2(\R)}}_{\text{purely imaginary by Parseval's identity}} \nonumber \\
& + & \mathbf{Re} \left( \japx^{-\sigma} D_x^{p-1} v_k^{(\alpha\beta)}(t),v_k^{(\alpha\beta)}(t) \right)_{L^2(\R)} \nonumber \\
& \geq & - \| f_k^{(\alpha\beta)} (t) \|_{L^2(\R)} \| v_k^{(\alpha\beta)}(t) \|_{L^2(\R)} + \mathbf{Re} \left( \japx^{-\sigma} D_x^{p-1} v_k^{(\alpha\beta)}(t),v_k^{(\alpha\beta)}(t) \right)_{L^2(\R)}.
\end{eqnarray}
Consequently, we need to estimate the following terms:
\begin{itemize}
\item[1.] $\mathbf{Re} \left( \japx^{-\sigma} D_x^{p-1} v_k^{(\alpha\beta)}(t),v_k^{(\alpha\beta)}(t) \right)_{L^2(\R)}$ from below,
\item[2.] $\| f_k^{(\alpha\beta)} (t) \|_{L^2(\R)}$ from above.
\end{itemize}

In the proof of the two estimates above we will use the following lemma whose proof follows readily the same argument of the proof of Lemma 2 in \cite{AACpevolGevreynec}, the only difference is that here we require well-posedness in $\mathcal{S}^\theta_s(\R)$. We omit the proof and refer the reader to \cite{AACpevolGevreynec} for the sake of brevity.

\begin{lemma}\label{lemma_derivatives_of_v_k^alphabeta}
	If the Cauchy problem \eqref{cauchy_problem_model_operator} is well-posed in $\mathcal{S}^\theta_s(\R)$, then for all $r, N \in \mathbb{N}$, the following estimate holds:
	$$
	\| D_x^r v_k^{(\alpha\beta)}(t) \|_{L^2(\R)} \leq C \sigma_k^r \| v_k^{(\alpha\beta)}(t) \|_{L^2(\R)} + C^{\alpha+\beta+N+1} (\alpha!\beta!)^{\theta_h} N!^{2\theta_h-1} \sigma_k^{r-N},
	$$
	for some positive constant $C$ which does not depend on $k$.
\end{lemma}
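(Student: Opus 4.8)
The plan is to exploit that $v_k^{(\alpha\beta)}=w_k^{(\alpha\beta)}(x,D)u_k$ is, up to a Gevrey‑small tail, localised at frequencies $\xi\asymp\sigma_k$ (Remark \ref{remark_comparable}), so that $D_x^r$ acts on it essentially as multiplication by $\sigma_k^r$. Concretely, write $D_x^rv_k^{(\alpha\beta)}(t)=\mathbf{op}(\xi^r)\,w_k^{(\alpha\beta)}(x,D)u_k(t)$ and split $\xi^r=\xi^r\psi_k(\xi)+\xi^r\bigl(1-\psi_k(\xi)\bigr)$, where $\psi_k(\xi)=\psi\bigl(\tfrac{\xi-\sigma_k}{\sigma_k/4}\bigr)$ for a fixed $\psi\in G_0^{\theta_h}(\R)$ with $\psi\equiv1$ on $[-2,2]$ and $\supp\psi\subset[-3,3]$; by Remark \ref{remark_comparable} then $\psi_k\equiv1$ on a neighbourhood of $\supp_\xi w_k^{(\alpha\beta)}$ (with a gap $\gtrsim\sigma_k$) and $\supp\psi_k\subset\{\xi\asymp\sigma_k\}$. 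This gives
$$
D_x^rv_k^{(\alpha\beta)}(t)=\mathbf{op}(\xi^r\psi_k)\,v_k^{(\alpha\beta)}(t)+\mathbf{op}\bigl(\xi^r(1-\psi_k)\bigr)\circ w_k^{(\alpha\beta)}(x,D)\,u_k(t).
$$
The first summand is handled at once: $\mathbf{op}(\xi^r\psi_k)$ is a Fourier multiplier whose symbol is supported in $\{\xi\asymp\sigma_k\}$ and bounded there by $C_r\sigma_k^r$, hence $\|\mathbf{op}(\xi^r\psi_k)v_k^{(\alpha\beta)}(t)\|_{L^2}\le C_r\sigma_k^r\|v_k^{(\alpha\beta)}(t)\|_{L^2}$, which is the first term on the right‑hand side of the claimed inequality.

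For the second summand, the key observation is that the $\xi$‑projections of the supports of $\xi^r(1-\psi_k)$ and of $w_k^{(\alpha\beta)}$ are separated by a distance $\gtrsim\sigma_k$. Consequently, by the Gevrey symbolic calculus of \cite{AACpevolGevreynec} (asymptotic composition formula together with its order‑$N$ remainder estimate in the class $G^{\theta_h}$, combined with the seminorm bounds for $w_k^{(\alpha\beta)}$ recorded in Lemma \ref{lemma_estimates_w_k}), the operator $\mathbf{op}(\xi^r(1-\psi_k))\circ w_k^{(\alpha\beta)}(x,D)$ has, for every $N\in\N$,
$$
\bigl\|\mathbf{op}(\xi^r(1-\psi_k))\circ w_k^{(\alpha\beta)}(x,D)\bigr\|_{L^2\to L^2}\le C^{\alpha+\beta+N+1}(\alpha!\beta!)^{\theta_h}N!^{2\theta_h-1}\sigma_k^{r-N},
$$
with $C$ independent of $k,\alpha,\beta,N$: the factor $\sigma_k^{-N}$ is bought by integrating by parts $N$ times across the support gap, the factor $\sigma_k^r$ is the order of $\xi^r$, the factor $(\alpha!\beta!)^{\theta_h}$ is the Gevrey‑$\theta_h$ cost of the derivatives $h^{(\alpha)},h^{(\beta)}$ appearing in $w_k^{(\alpha\beta)}$, and $N!^{2\theta_h-1}$ is the Gevrey‑$\theta_h$ cost of truncating the composition at order $N$. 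Applying this operator to $u_k(t)$ and invoking the uniform bound $\|u_k(t)\|_{L^2}\le C_{T,\rho}\|\phi\|_{H^0_{\rho;s,\theta}}$ from \eqref{estimate_solution_sequence} produces exactly the second, remainder term, and the lemma follows.

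The hard part is the uniform‑in‑$k$ bookkeeping in this symbolic calculus: the symbols $w_k^{(\alpha\beta)}$ lie only in $S^0_{0,0}(\R^2)$ and carry the $k$‑dependent anisotropic scalings $\sigma_k^{p-1}$ in $x$ and $\sigma_k$ in $\xi$, so the composition and remainder estimates must be carried out with Calder\'on--Vaillancourt type bounds (Theorem \ref{CVthm}), and one has to check that the constants do not deteriorate as $\sigma_k\to\infty$ and that the three independent parameters $\alpha$, $\beta$, $N$ enter with precisely the factorial weights above. As remarked in the text, this is exactly the scheme of Lemma 2 in \cite{AACpevolGevreynec}, which transfers verbatim once well‑posedness is assumed in $\mathcal{S}^\theta_s(\R)$, the only change being that the energy input is now \eqref{estimate_solution_sequence}. (Alternatively one may bypass the general calculus using the tensor structure $w_k^{(\alpha\beta)}(x,\xi)=h^{(\alpha)}(\cdot)\,h^{(\beta)}(\cdot)$: composing $\mathbf{op}(\xi^r\psi_k)$ with $w_k^{(\alpha\beta)}(x,D)$ and Taylor expanding $\zeta\mapsto(\eta+\zeta)^r\psi_k(\eta+\zeta)$ to order $r+1$ leaves a remainder that is an oscillatory integral against $\widehat{h^{(\alpha)}}(\sigma_k^{p-1}\cdot)$ over $|\zeta|\gtrsim\sigma_k$, hence of size $\le C^{\alpha+1}\alpha!^{\theta_h}e^{-c\sigma_k^{p/\theta_h}}$ — stronger than required and trivially bounded by the stated remainder for every $N$.)
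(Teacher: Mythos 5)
Your decomposition $\xi^r=\xi^r\psi_k+\xi^r(1-\psi_k)$, with the near piece handled by Plancherel (a Fourier multiplier of size $C^r\sigma_k^r$ on $\supp\psi_k$) and the far piece reduced to an order-$N$ composition remainder across the $\xi$-support gap of width $\gtrsim\sigma_k$, is exactly the mechanism behind the proof the paper omits (it refers to Lemma 2 of \cite{AACpevolGevreynec}) and mirrors how the paper itself treats $\mathtt{I}_{3,k}$ in Subsection \ref{longproof}. The only step worth writing out explicitly is that for $N>r$ every expansion term $\frac{1}{\gamma!}\partial_\xi^\gamma\bigl(\xi^r(1-\psi_k)\bigr)D_x^\gamma w_k^{(\alpha\beta)}$ with $\gamma<N$ vanishes identically (all derivatives of $1-\psi_k$ vanish on a neighbourhood of $\supp_\xi w_k^{(\alpha\beta)}$), while $\partial_\xi^N\bigl(\xi^r(1-\psi_k)\bigr)=-\partial_\xi^N\bigl(\xi^r\psi_k\bigr)$ is supported where $\xi\asymp\sigma_k$ and bounded by $C^{N+1}N!^{\theta_h}\sigma_k^{r-N}$, so the surviving remainder lies in $S^0_{0,0}(\R)$ and Theorem \ref{CVthm} can be applied against the uniformly bounded $\|u_k(t)\|_{L^2}$ from \eqref{estimate_solution_sequence} rather than an uncontrolled $H^r$ norm, yielding precisely the stated second term.
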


\textbf{1. Estimate from below of $\mathbf{Re} \left( \japx^{-\sigma} D_x^{p-1} v_k^{(\alpha\beta)}(t),v_k^{(\alpha\beta)}(t) \right)_{L^2(\R)}$.}

Let $\chi_k$ and $\psi_k$ cut-off functions defined by
\begin{equation}\label{cut-off_functions}
\chi_k(\xi) = h \left( \frac{\xi - \sigma_k}{\frac{3}{4}\sigma_k} \right) \quad \text{and} \quad \psi_k(x) = h \left( \frac{x-4\sigma_k^{p-1}}{3\sigma_k^{p-1}} \right),
\end{equation}
respectively. Recalling the definition of $h$, we notice that in the support of $\psi_k(x)\chi_k(\xi)$ we have
$$
| \xi - \sigma_k | \leq \frac{3}{4}\sigma_k \quad \Leftrightarrow \quad \frac{\sigma_k}{4} \leq \xi \leq \frac{7\sigma_k}{4}
$$
and
$$
| x - 4\sigma_k^{p-1} | \leq 3\sigma_k^{p-1} \quad \Leftrightarrow \quad \sigma_k^{p-1} \leq x \leq 7\sigma_k^{p-1},
$$
for all $k \in \mathbb{N}_0$. From these inequalities, it follows that if $(x,\xi) \in \text{supp} \{\psi_k(x) \chi_k(\xi)\}$, then
$$
\xi^{p-1} \geq \frac{\sigma_k^{p-1}}{4^{p-1}} \quad \text{and} \quad \japx^{-\sigma} \geq 7^{-\sigma} \langle \sigma_k^{p-1} \rangle^{-\sigma},
$$
hence,
$$
\xi^{p-1} \japx^{-\sigma} \geq \frac{7^{-\sigma}}{4^{p-1}} \langle \sigma_k^{p-1} \rangle^{-\sigma} \sigma_k^{p-1}.
$$
By setting $\Theta:=\frac{7^{-\sigma}}{4^{p-1}}$, let us consider a decomposition of the symbol of $\japx^{-\sigma}D_x^{p-1}$ in the following way:
\begin{eqnarray}
\japx^{-\sigma}\xi^{p-1} & = &  \underbrace{\Theta \langle \sigma_k^{p-1} \rangle^{-\sigma} \sigma_k^{p-1}}_{=:\mathtt{I}_{1,k}} + \underbrace{\left( \japx^{-\sigma}\xi^{p-1} - \Theta \langle \sigma_k^{p-1} \rangle^{-\sigma} \sigma_k^{p-1} \right) \psi_k(x) \chi_k(\xi)}_{=:\mathtt{I}_{2,k}(x,\xi)} \nonumber \\
& + & \underbrace{\left( \japx^{-\sigma} \xi^{p-1} - \Theta \langle \sigma_k^{p-1} \rangle^{-\sigma} \sigma_k^{p-1} \right) \left( 1 - \psi_k(x)\chi_k(\xi) \right)}_{=:\mathtt{I}_{3,k}(x,\xi)} \nonumber \\
& = & \mathtt{I}_{1,k} + \mathtt{I}_{2,k}(x,\xi) + \mathtt{I}_{3,k}(x,\xi). \nonumber
\end{eqnarray}
In the following, we shall estimate each $\mathbf{Re} \left( \mathtt{I}_{\ell,k}(x,D) v_k^{(\alpha\beta)}(t), v_k^{(\alpha\beta)}(t) \right)_{L^2(\R)}$, $\ell \in \{1,2,3\}$.

\begin{itemize}

\item \textbf{Estimate of $\mathbf{Re} \left( I_{1,k} v_k^{(\alpha\beta)}(t),v_k^{(\alpha\beta}(t) \right)_{L^2(\R)}$.} Since $\mathtt{I}_{1,k} = \Theta \langle \sigma_k^{p-1} \rangle^{-\sigma} \sigma_k^{p-1}$, we get
\begin{eqnarray}\label{estimate_from_below_I_1,k}
\mathbf{Re} \left( \mathtt{I}_{1,k} v_k^{(\alpha\beta)} (t), v_k^{(\alpha\beta)}(t) \right)_{L^2(\R)} & = & \mathbf{Re} \left( \Theta \langle \sigma_k^{p-1} \rangle^{-\sigma} \sigma_k^{p-1} v_k^{(\alpha\beta)}(t), v_k^{\alpha\beta}(t) \right)_{L^2(\R)} \nonumber \\
& = & \Theta \langle \sigma_k^{p-1} \rangle^{-\sigma} \sigma_k^{p-1} \| v_k^{(\alpha\beta)}(t) \|_{L^2(\R)}^2 \nonumber \\
& \geq & 2^{-\frac{\sigma}{2}} \Theta \sigma_k^{(p-1)(1-\sigma)} \| v_k^{(\alpha\beta)}(t) \|_{L^2(\R)}^2.
\end{eqnarray}

\item \textbf{Estimate of $\mathbf{Re} \left( \mathtt{I}_{2,k}(x,D) v_k^{(\alpha\beta)}(t), v_k^{(\alpha\beta)}(t) \right)_{L^2(\R)}$.} Just recall that
$$
\mathtt{I}_{2,k}(x,\xi) = \left( \japx^{-\sigma} \xi^{p-1} - \Theta \langle \sigma_k^{p-1} \rangle^{-\sigma} \sigma_k^{p-1} \right) \psi_k(x) \chi_k(\xi),
$$
and this symbol belongs to $\mathbf{SG}^{p-1,-\sigma}(\R^2)$, cf. Definition \ref{SGsymbols}, with uniform estimates with respect to $k$. Indeed, we have that
\begin{eqnarray}
| \partialxi^\gamma \partialx^\nu \mathtt{I}_{2,k}(x,\xi) | & = & \left| \partialxi^\gamma\partialx^\nu \left[ \left( \japx^{-\sigma} \xi^{p-1} - \Theta \langle \sigma_k^{p-1} \rangle^{-\sigma} \sigma_k^{p-1} \right) \psi_k(x) \chi_k(\xi) \right] \right| \nonumber \\
& \leq & \sum_{\stackrel{\gamma_1+\gamma_2=\gamma}{\nu_1+\nu_2=\nu}} \frac{\gamma!\nu!}{\gamma_1!\gamma_2!\nu_1!\nu_2!} \left| \partialxi^{\gamma_1} \partialx^{\nu_1} \left( \japx^{-\sigma} \xi^{p-1} - \Theta \langle \sigma_k^{p-1} \rangle \sigma_k^{p-1} \right) \right| \nonumber \\
& \times &| \partialx^{\nu_2} \psi_k(x) | | \partialxi^{\gamma_2} \chi_k(\xi) |. \nonumber
\end{eqnarray}
Since $h \in G_0^{\theta_h}(\R)$, $\psi_k$ and $\chi_k$ satisfy, respectively,
$$
| \partialx^{\nu_2} \psi_k(x) | = \left| \partialx^{\nu_2} h \left( \frac{x-4\sigma_k^{p-1}}{3\sigma_k^{p-1}} \right) \right| \leq C^{\nu_2 + 1} \nu_2!^{\theta_h} \sigma_k^{-(p-1)\nu_2}
$$
and
$$
| \partialxi^{\gamma_2} \chi_k(\xi) | = \left| \partialxi^{\gamma_2} h \left( \frac{\xi - \sigma_k}{\frac{3}{4}\sigma_k} \right) \right| \leq C^{\gamma_2 + 1} \gamma_2!^{\theta_h} \sigma_k^{-\gamma_2}.
$$
Now, by using the above inequalities and the fact that $x$ is comparable with $\sigma_k^{p-1}$ and $\xi$ is comparable with $\sigma_k$ on the support of $\psi_k(x)\chi_k(\xi)$, we can compute and estimate
\begin{eqnarray}
& & | \partialxi^\gamma \partialx^\nu \mathtt{I}_{2,k}(x,\xi) | \nonumber \\
& \leq & \sum_{\stackrel{\gamma_1+\gamma_2=\gamma}{\nu_1+\nu_2=\nu}} \frac{\gamma!\nu!}{\gamma_1!\gamma_2!\nu_1!\nu_2!} C^{\gamma_1 + \nu_1 + 1} \gamma_1! \nu_1! \japx^{-\sigma-\nu_1} \japxi^{p - 1 - \gamma_1} C^{\gamma_1 + \nu_2 + 1} \gamma_2!^{\theta_h} \nu_2!^{\theta_h} \sigma_k^{-\gamma_2} \sigma_k^{-\nu_2(p-1)} \nonumber \\
& \leq & C^{\gamma + \nu + 1} (\gamma!\nu!)^{\theta_h} \japxi^{p - 1 - \gamma} \japx^{-\sigma - \nu}. \nonumber
\end{eqnarray}
Moreover, by the choice of $\Theta$, we have that $\mathtt{I}_{2,k}(x,\xi) \geq 0$, for all $x,\xi \in \R$. Then it follows from Proposition \ref{theorem_sharp_garding} that
$$
\mathtt{I}_{2,k}(x,D) = \mathtt{p}_{2,k}(x,D) + \mathtt{r}_{2,k}(x,D),
$$
where $\mathtt{p}_{2,k}(x,D)$ is a nonnegative operator and $\mathtt{r}_{2,k} \in \mathbf{SG}^{p-2,-\sigma-1}(\R^2)$. Once the semi-norms of $\mathtt{I}_{2,k}$ are uniformly bounded with respect to $k$, the same holds to the semi-norms of $\mathtt{r}_{2,k}$. In this way, we have that
\begin{eqnarray}
\mathbf{Re} \left( \mathtt{I}_{2,k}(x,D) v_k^{(\alpha\beta)}(t), v_k^{(\alpha\beta)}(t) \right)_{L^2(\R)} & \geq & \mathbf{Re} \left( \mathtt{r}_{2,k}(x,D) v_k^{(\alpha\beta)}(t), v_k^{(\alpha\beta)}(t) \right)_{L^2(\R)} \nonumber \\
& = & \mathbf{Re} ( \underbrace{\mathtt{r}_{2,k}\japx^{\sigma+1}}_{\text{order} \ p-2} \japx^{-\sigma-1} v_k^{(\alpha\beta)}(t), v_k^{(\alpha\beta)}(t) )_{L^2(\R)} \nonumber \\
& \geq & -C \| \japx^{-\sigma-1} v_k^{(\alpha\beta)}(t) \|_{H^{p-2}(\R)} \| v_k^{(\alpha\beta)}(t) \|_{L^2(\R)} \nonumber \\
& \geq & -C \| v_k^{(\alpha\beta)}(t) \|_{L^2(\R)} \sum_{\ell = 0}^{p-2} \left\| D_x^\ell \left( \japx^{-\sigma-1} v_k^{(\alpha\beta)}(t) \right) \right\|_{L^2(\R)}. \nonumber
\end{eqnarray}
To deal with the terms $\left\| D_x^\ell \left( \japx^{-\sigma-1} v_k^{(\alpha\beta)}(t) \right) \right\|_{L^2(\R)}$ we need to use the Leibniz formula and write
$$
D_x^\ell \left( \japx^{-\sigma-1} v_k^{(\alpha\beta)}(t) \right) = \sum_{\ell^\prime=0}^\ell \binom{\ell}{\ell^\prime} D_x^{\ell^\prime} \japx^{-\sigma-1} D_x^{\ell-\ell^\prime} v_k^{(\alpha\beta)}(t).
$$
On the support of $D_x^{\ell-\ell^\prime}v_k^{(\alpha\beta)}(t)$, $x$ is comparable with $\sigma_k^{p-1}$. Hence,
$$
\left\| D_x^\ell \left( \japx^{-\sigma-1} v_k^{(\alpha\beta)}(t) \right) \right\|_{L^2(\R)}  \leq C^{\ell+1} \sigma_k^{-(p-1)(\sigma+1)} \sum_{\ell^\prime=0}^\ell \frac{\ell!}{(\ell-\ell^\prime)!} \|D_x^{\ell-\ell^\prime} v_k^{(\alpha\beta)}(t) \|_{L^2(\R)}.
$$
Applying Lemma \ref{lemma_derivatives_of_v_k^alphabeta} for $N=N_k$, the following estimate can be achieved:
\begin{eqnarray}
\left\| D_x^\ell \left( \japx^{-\sigma-1} v_k^{(\alpha\beta)}(t) \right) \right\|_{L^2(\R)} & \leq & C \sigma_k^{-(p-1)(\sigma+1)+\ell} \|v_k^{(\alpha\beta)}\|_{L^2(\R)} \nonumber \\
& + & C^{\alpha+\beta+N_k+1} (\alpha!\beta!)^{\theta_h} N_k!^{2\theta_h-1} \sigma_k^{-(p-1)(\sigma+1)} \sigma_k^{\ell-N_k}. \nonumber
\end{eqnarray}
Therefore, we conclude that
\begin{multline}\label{estimate_from_below_I_2,k}
\mathbf{Re} \left( \mathtt{I}_{2,k}(x,D)v_k^{(\alpha\beta)}(t),v_k^{(\alpha\beta)}(t) \right)_{L^2(\R)}  \geq  -C \sigma_k^{-(p-1)\sigma - 1} \| v_k^{(\alpha\beta)}(t) \|_{L^2(\R)}^2  \\
 -  C^{\alpha+\beta+N_k+1} (\alpha!\beta!)^{\theta_h} N_k!^{2\theta_h-1} \sigma_k^{-(p-1)\sigma-N_k-1} \| v_k^{(\alpha\beta)}(t) \|_{L^2(\R)}. 
\end{multline}

\item \textbf{Estimate of $\mathbf{Re} \left( \mathtt{I}_{3,k}(x,D)v_k^{(\alpha\beta)}(t),v_k^{(\alpha\beta)}(t) \right)_{L^2(\R)}$.} Recalling that
$$
\mathtt{I}_{3,k}(x,\xi) = \left( \japx^{-\sigma} \xi^{p-1} - \Theta \langle \sigma_k^{p-1} \rangle^{-\sigma} \sigma_k^{p-1} \right) (1 - \psi_k(x)\chi_k(\xi) ),
$$
due to the fact that the supports of $w_k^{(\alpha\beta)}(x,\xi)$ and $1-\psi_k(x)\chi_k(\xi)$ are disjoint, by Proposition \ref{compositionthm} we may write
$$
\mathtt{I}_{3,k}(x,D) \circ w_k^{(\alpha\beta)}(x,D) = R_k^{(\alpha\beta)}(x,D),
$$
where
$$
R_k^{(\alpha\beta)}(x,\xi) = N_k \int_0^1 \frac{(1-\vartheta)^{N_k-1}}{N_k!}Os-\iint e^{-iy\eta} \partialxi^{N_k} \mathtt{I}_{3,k}(x,\xi+\vartheta\eta) D_x^{N_k} w_k^{(\alpha\beta)}(x+y,\xi) dy \dslash\eta d\vartheta.
$$
The semi-norms of $R_k^{(\alpha\beta)}$ can be estimated as follows: for each $\ell_0 \in \mathbb{N}_0$, there exists $\ell_1 = \ell_1(\ell_0)$ such that
$$
| R_k |_{\ell_0}^{(0)} \leq C(\ell_0) \frac{N_k}{N_k!} | \partialxi^{N_k} \mathtt{I}_{3,k} |_{\ell_1}^{(0)} | \partialx^{N_k} w_k^{(\alpha\beta)} |_{\ell_1}^{(0)}.
$$
From Lemma \ref{lemma_estimates_w_k}, we have that
$$
| \partialx^{N_k} w_k^{(\alpha\beta)} |_{\ell_1}^{(0)} \leq C^{\ell_1 + \alpha + \beta + N_k +1} \ell_1!^{2\theta_h} (\alpha! \beta! N_k!)^{\theta_h} \sigma_k^{-N_k(p-1)}.
$$
Since $\sigma_k \to +\infty$, we can assume that $N_k \geq p$. Then
$$
\partialxi^{N_k} \mathtt{I}_{3,k} (x,\xi) = - \sum_{\stackrel{N_1 + N_2 = N_k}{N_1 \leq p-1}} \frac{N_k!}{N_1!N_2!} \partialxi^{N_1} \left( \japx^{-\sigma}\xi^{p-1} - \Theta \langle \sigma_k^{p-1} \rangle^{-\sigma} \sigma_k^{p-1} \right) \psi_k(x) \partialxi^{N_2}\chi_k(\xi).
$$
Hence,
$$
| \partialxi^{N_k} \mathtt{I}_{3,k}(x,\xi) |_{\ell_1}^{(0)} \leq C^{\ell_1+N_k+1} \ell_1!^{2\theta_h} N_k!^{\theta_h} \sigma_k^{p-1-N_k}.
$$
By gathering the previous estimates, we obtain
$$
| R_k^{(\alpha\beta)} |_{\ell_0}^{(0)} \leq C^{\alpha+\beta+N_k+1} (\alpha!\beta!)^{\theta_h} N_k!^{2\theta_h-1} \sigma_k^{p(1-N_k)-1},
$$
which provides that
$$
\| I_{3,k}(x,D) v_k^{(\alpha\beta)}(t) \|_{L^2(\R)} \leq C^{\alpha+\beta+N_k+1} (\alpha!\beta!)^{\theta_h} N_k!^{2\theta_h-1} \sigma_k^{p(1-N_k)-1}.
$$
Finally, we get
\begin{equation}\label{estimate_from_below_I_3,k}
\mathbf{Re} \left( \mathtt{I}_{3,k}(x,D) v_k^{(\alpha\beta)}(t), v_k^{(\alpha\beta)}(t) \right)_{L^2(\R)} \geq -C^{\alpha+\beta+N_k+1} (\alpha!\beta!)^{\theta_h} N_k!^{2\theta_h-1} \sigma_k^{p(1-N_k) - 1} \| v_k^{(\alpha\beta)}(t) \|_{L^2(\R)}.
\end{equation}

\end{itemize}

From \eqref{estimate_from_below_I_1,k}, \eqref{estimate_from_below_I_2,k} and \eqref{estimate_from_below_I_3,k}, we obtain that 
if the Cauchy problem \eqref{cauchy_problem_model_operator} is well-posed in $\mathcal{S}^\theta_s(\R)$, then for all $k$ sufficiently large the estimate 
\begin{eqnarray}
\mathbf{Re} \left( \japx^{-\sigma} D_x^{p-1} v_k^{(\alpha\beta)}(t), v_k^{(\alpha\beta)}(t) \right)_{L^2(\R)} & \geq & c_1 \sigma_k^{(p-1)(1-\sigma)} \| v_k^{(\alpha\beta)}(t) \|_{L^2(\R)}^2 \nonumber \\
& - & C^{\alpha+\beta+N_k+1} (\alpha!\beta!)^{\theta_h} N_k!^{2\theta_h-1} \sigma_k^{p(1-N_k)-1} \| v_k^{\alpha\beta}(t) \|_{L^2(\R)} \label{lemma_japx_D_x^p-1_below}
\end{eqnarray}
holds for some $c_1>0$ independent of $k, \alpha, \beta$ and $N_k$.
\\

\textbf{2. Estimates from above of $\| f_k^{(\alpha\beta)}(t) \|_{L^2(\R)}$.}

We know that
$$
f_k^{(\alpha\beta)} = [M,w_k^{(\alpha\beta)}]u_k = [D_t + D_x^p, w_k^{(\alpha\beta)}]u_k + [i\japx^{-\sigma}D_x^{p-1},w_k^{(\alpha\beta)}]u_k.
$$
So, our goal is to estimate the $L^2$-norms of the two commutators in the above right-hand side.

\begin{itemize}

\item \textbf{Estimate of $[D_t+D_x^p,w_k^{(\alpha\beta)}]u_k$.} Since $w_k^{(\alpha\beta)}$ does not depend on $t$, we have that
\begin{eqnarray}\label{bracket_D_t+D_x}
[D_t+D_x^p,w_k^{(\alpha\beta)}]u_k = \sum_{\gamma=1}^p \binom{p}{\gamma} D_x^\gamma w_k^{(\alpha\beta)} D_x^{p-\gamma}u_k.
\end{eqnarray}
To deal with the right-hand side of \eqref{bracket_D_t+D_x}, we will need the following identities:
\begin{equation}\label{function_derivative_of_function}
f(x) D_x^\ell g(x) = \sum_{j=0}^\ell (-1)^j \binom{\ell}{j} D_x^{\ell-j} ( g(x) D^j f(x) ),
\end{equation}
for smooth functions $f$ and $g$,
$$
\sum_{\gamma=1}^p \sum_{j=0}^{p-\gamma} c_{\gamma,j} a_{\gamma+j} = \sum_{\ell=1}^p \left( \sum_{\mathtt{q}=1}^\ell c_{\mathtt{q},\ell-\mathtt{q}} \right) a_\ell
$$
and
$$
\frac{(-1)^{\ell+1}}{\ell!} = \sum_{\mathtt{q}=1}^\ell \frac{(-1)^{\ell-\mathtt{q}}}{\mathtt{q}!(\ell-\mathtt{q})!}.
$$
Then \eqref{bracket_D_t+D_x} becomes
\begin{eqnarray}
& & [D_t + D_x^p, w_k^{(\alpha\beta)}] \nonumber \\
& = & \sum_{\gamma=1}^p \sum_{j=0}^{p-\gamma} \frac{(-1)^j p!}{\gamma! j! (p-\gamma-j)!} \left( \frac{1}{i \sigma_k^{p-1}} \right)^{\gamma+j} D_x^{p-\gamma-j} \circ w_k^{((\alpha+\ell+j)\beta)} (x,D) \nonumber \\
& = & \sum_{\ell=1}^p \left( \sum_{\mathtt{q}=1}^\ell \frac{(-1)^{\ell - \mathtt{q}}}{\mathtt{q}! (\ell-\mathtt{q})!} \right) \frac{p!}{(p-\ell)!} \left( \frac{1}{i\sigma_k^{p-1}} \right)^\ell D_x^{p-\ell} \circ w_k^{((\alpha+\ell)\beta)} (x,D) \nonumber \\
& = & \sum_{\ell=1}^p (-1)^{\ell+1} \binom{p}{\ell} \left( \frac{1}{i\sigma_k^{p-1}} \right)^\ell D_x^{p-\ell} \circ w_k^{((\alpha+\ell)\beta)}(x,D). \nonumber 
\end{eqnarray}
By Lemma \ref{lemma_derivatives_of_v_k^alphabeta}, we get
\begin{eqnarray}\label{estimate_bracket_D_t+D_x^p}
& & \left\| [ D_t + D_x^p, w_k^{(\alpha\beta)} ] u_k \right\|_{L^2(\R)} \nonumber \\
& \leq & C \sum_{\ell=1}^p \binom{p}{\ell} \frac{1}{\sigma_k^{\ell(p-1)}} \| D_x^{p-\ell} v_k^{((\alpha+\ell)\beta)} \|_{L^2(\R)} \nonumber \\
& \leq & C \sum_{\ell=1}^p \frac{1}{\sigma_k^{p(\ell-1)}} \| v_k^{((\alpha+\ell)\beta)} \|_{L^2(\R)} + C^{\alpha+\beta+N_k+1} (\alpha!\beta!)^{\theta_h} N_k!^{2\theta_h-1} \sigma_k^{p-1-N_k}.
\end{eqnarray}

\item \textbf{Estimate of $[i \japx^{-\sigma} D_x^{p-1},w_k^{(\alpha\beta)}] u_k$.} Notice that by Proposition \ref{compositionthm} of the Appendix the commutator can be written in the following way:
\begin{eqnarray}\label{identity_i_jap_D_x^p-1}
[ i \japx^{-\sigma} D_x^{p-1}, w_k^{(\alpha\beta)} ] & = & i \japx^{-\sigma} \sum_{\gamma=1}^{p-1} \binom{p-1}{\gamma} D_x^\gamma w_k^{(\alpha\beta)}(x,D) D_x^{p-1-\gamma} \nonumber \\
& - & i \sum_{\gamma=1}^{N_k-1} \frac{1}{\gamma!} D_x^\gamma \japx^{-\sigma} \partialxi^\gamma w_k^{(\alpha\beta)}(x,D) D_x^{p-1} + r_k^{(\alpha\beta)}(x,D),
\end{eqnarray}
where the symbol of $r_k^{(\alpha\beta)}(x,D)$ is
$$
r_k^{(\alpha\beta)}(x,\xi) = -iN_k \int_0^1 \frac{(1-\vartheta)^{N_k-1}}{N_k!} Os - \iint e^{-iy\eta} \partialxi^{N_k} w_k^{(\alpha\beta)}(x,\xi+\vartheta\eta) D_x^{N_k} \langle x + y \rangle^{-\sigma} \xi^{p-1} dy \dslash\eta d\vartheta.
$$
In order to estimate $r_k^{(\alpha\beta)}$, we use the support properties of $w_k^{(\alpha\beta)}$ and write
$$
\xi^{p-1} = (\xi + \vartheta\eta-\vartheta\eta)^{p-1} = \sum_{\ell=0}^{p-1} \binom{p-1}{\ell} (\xi + \vartheta\eta)^\ell (-\vartheta\eta)^{p-1-\ell}.
$$
Now we deal with the oscillatory integral by using the above identity and integration by parts, which lead us to
\begin{eqnarray}
& & Os - \iint e^{-iy\eta} \partialxi^{N_k} w_k^{(\alpha\beta)}(x,\xi+\vartheta\eta) D_x^{N_k} \langle x + y \rangle^{-\sigma} \xi^{p-1} dy \dslash\eta \nonumber \\
& = & \sum_{\ell=0}^{p-1} \binom{p-1}{\ell} \vartheta^{p-1-\ell} \nonumber \\
& \times & Os - \iint D_y^{p-1-\ell} e^{-iy\eta} \partialxi^{N_k} w_k^{(\alpha\beta)} (x,\xi+\vartheta\eta) (\xi+\vartheta\eta)^\ell D_x^{N_k} \langle x + y \rangle^{-\sigma} dy \dslash\eta \nonumber \\
& = & \sum_{\ell=0}^{p-1} \binom{p-1}{\ell} (-\vartheta)^{p-1-\ell} \nonumber \\
& \times & Os - \iint e^{-iy\eta} \partialxi^{N_k} w_k^{(\alpha\beta)}(x,\xi+\vartheta\eta) (\xi+\vartheta\eta)^\ell D_x^{N_k + p - 1 - \ell} \langle x + y \rangle^{-\sigma} dy \dslash\eta. \nonumber
\end{eqnarray}
Hence, we can estimate the semi-norms of $r_k^{(\alpha\beta)}$ in the following way:
\begin{eqnarray}
| r_k^{(\alpha\beta)} |_{\ell_0}^{(0)} & \leq & \frac{C^{N_k + p -1}}{N_k!} \sum_{\ell=0}^{p-1} | \xi^\ell \partialxi^{N_k} w_k^{(\alpha\beta)} |_{\ell_1}^{(0)} | D_x^{N_k + p - 1 - \ell} \japx^{-\sigma} |_{\ell_1}^{(0)} \nonumber \\
& \leq & C^{\alpha+\beta+N_k+1} (\alpha!\beta!)^{\theta_h} N_k!^{2\theta_h-1} \sigma_k^{p-1-N_k}.
\end{eqnarray}
By using Proposition \ref{CVthm} and the well-posedness in $\mathcal{S}^\theta_s(\R)$, it can be concluded that
\begin{equation}\label{estimate_r_k(x,D)u_k}
\| r_k^{(\alpha\beta)}(x,D) u_k \|_{L^2(\R)} \leq C^{\alpha+\beta+N_k+1} (\alpha!\beta!)^{\theta_h} N_k!^{2\theta_h-1} \sigma_k^{p-1-N_k}.
\end{equation}
For treating the term 
$$
\sum_{\gamma=1}^{N_k-1} \frac{1}{\gamma!} D_x^\gamma \japx^{-\sigma} \partialxi^\gamma w_k^{(\alpha\beta)}(x,D) D_x^{p-1},
$$ we can apply formula \eqref{function_derivative_of_function}. Then we have
\begin{eqnarray}
& & \sum_{\gamma=1}^{N_k-1} \frac{1}{\gamma!} D_x^\gamma \japx^{-\sigma} \partialxi^\gamma w_k^{(\alpha\beta)}(x,D) D_x^{p-1} \nonumber \\
& = & \sum_{\gamma=1}^{N_k-1} \sum_{\ell=0}^{p-1} \frac{1}{\gamma!} \binom{p-1}{\ell} \left( \frac{4}{\sigma_k} \right)^\gamma \left( \frac{1}{\sigma_k^{p-1}} \right)^\ell D_x^\gamma \japx^{-\sigma} D_x^{p-1-\ell} \circ w_k^{((\alpha+\ell)(\beta+\gamma))} (x,D).
\end{eqnarray}
Using the support of $D_x^{p-1-\ell} v_k^{((\alpha+\ell)(\beta+\gamma))}$ and the fact that $ |D_x^\gamma \japx^{-\sigma} | \leq C^{\gamma+1} \gamma! \japx^{-\sigma-\gamma}$, it follows that
$$
\| D_x^\gamma \japx^{-\sigma} D_x^{p-1-\ell} v_k^{((\alpha+\ell)(\beta+\gamma))} \|_{L^2(\R)} \leq C^{\gamma+1} \gamma! \sigma_k^{-(p-1)(\sigma+\gamma)} \| D_x^{p-1-\ell} v_k^{((\alpha+\ell)(\beta+\gamma))} \|_{L^2(\R)}.
$$
By Lemma \ref{lemma_derivatives_of_v_k^alphabeta}, with $N=N_k-\gamma$, we get 
\begin{eqnarray*}
 \| D_x^{p-1-\ell} v_k^{((\alpha+\ell)(\beta+\gamma))} \|_{L^2(\R)} &\leq& C \sigma_k^{p-1-\ell} \| v_k^{((\alpha+\ell)(\beta+\gamma))} \|_{L^2(\R)} \\  
&& + C^{\alpha+\ell+\beta+N_k+1} \left( \alpha! \beta! \ell! \gamma! \right)^{\theta_h} (N_k - \gamma)^{2\theta_h-1} \sigma_k^{p-1-\ell-N_k+\gamma}. 
\end{eqnarray*}
Hence,
\begin{eqnarray}
& & \| D_x^\gamma \japx^{-\sigma} D_x^{p-1-\ell} v_k^{((\alpha+\ell)(\beta+\gamma))} \|_{L^2(\R)} \leq C^{\gamma+1} \gamma! \sigma_k^{-(\sigma+\gamma)} \sigma_k^{p-1-\ell} \| v_k^{((\alpha+\ell)(\beta+\gamma))} \|_{L^2(\R)} \nonumber \\
& + & C^{\gamma+1} \gamma! \sigma_k^{-(p-1)(\sigma+\gamma)} C^{\alpha+\ell+\beta+N_k+1} ( \alpha! \beta! \ell! \gamma! )^{\theta_h} (N_k-\gamma)!^{2\theta_h-1} \sigma_k^{p-1-\ell-(N_k-\gamma)} \nonumber \\
& = & C^{\gamma+1} \gamma! \sigma_k^{-(p-1)(\sigma+\gamma)+p-1-\ell} \| v_k^{((\alpha+\ell)(\beta+\gamma))} \|_{L^2(\R)} \nonumber \\
& + & C^{\gamma+1} \gamma! \sigma_k^{-(p-1)(\sigma+\gamma)} C^{\alpha+\ell+\beta+N_k} ( \alpha! \beta! \ell! \gamma! )^{\theta_h} (N_k-\gamma)!^{2\theta_h-1} \sigma_k^{p-1-\ell-(N_k-\gamma)}. \nonumber
\end{eqnarray}
Now we can estimate
\begin{eqnarray}\label{estimate_sum_in_the_middle}
	& & \left\| \sum_{\gamma=1}^{N_k-1} \frac{1}{\gamma!} D_x^\gamma \japx^{-\sigma} \partialxi^\gamma w_k^{(\alpha\beta)}(x,D) D_x^{p-1} u_k \right\|_{L^2(\R)} \nonumber \\
	 \leq && \sum_{\gamma=1}^{N_k-1} \sum_{\ell=0}^{p-1} \frac{1}{\gamma!} \binom{p-1}{\ell} \left( \frac{4}{\sigma_k} \right)^\gamma \left( \frac{1}{\sigma_k^{p-1}} \right)^\ell  \| D_x^\gamma \japx^{-\sigma} D_x^{p-1-\ell} \circ \underbrace{w_k^{((\alpha+\ell)(\beta+\gamma))}(x,D) u_k}_{=v_k^{( ( \alpha+\ell ) ( \beta+\gamma ) )}} \|_{L^2(\R)} \nonumber \\
	 \leq && \sum_{\gamma=1}^{N_k-1} \sum_{\ell=0}^{p-1} \frac{1}{\gamma!} \binom{p-1}{\ell} 4^\gamma \sigma_k^{-\gamma} \sigma_k^{-\ell(p-1)} \left\lbrace C^{\gamma+1} \gamma! \sigma_k^{-(p-1)(\sigma+\gamma)+p-1-\ell} \| v_k^{((\alpha+\ell)(\beta+\gamma))} \|_{L^2(\R)}\right. \nonumber \\
	 && +\left. C^{\gamma+1} \gamma! \sigma_k^{-(p-1)(\sigma+\gamma)} C^{\alpha+\ell+\beta+N_k} ( \alpha! \beta! \ell! \gamma! )^{\theta_h} (N_k - \gamma)!^{2\theta_h-1} \sigma_k^{p-1-\ell-(N_k-\gamma)} \right\rbrace \nonumber \\
	 \leq && C \sum_{\ell=0}^{p-1} \sum_{\gamma=1}^{N_k-1} C^\gamma \sigma_k^{p-1-(p-1)\sigma-p(\ell+\gamma)} \| v_k^{((\alpha+\ell)(\beta+\gamma))} \|_{L^2(\R)} \nonumber \\
	  && + \hskip2pt C^{\alpha+\beta+N_k+1} (\alpha!\beta!)^{\theta_h} N_k!^{2\theta_h-1} \sigma_k^{1-N_k-(p-1)\sigma}.
\end{eqnarray}

To estimate the first term of \eqref{identity_i_jap_D_x^p-1}, we can write
\begin{eqnarray}
& & \sum_{\gamma=1}^{p-1} \binom{p-1}{\gamma} D_x^\gamma w_k^{(\alpha\beta)}(x,D) D_x^{p-1-\gamma} \nonumber \\
& = & \sum_{\ell=1}^{p-1} (-1)^{\ell+1} \binom{p-1}{\ell} \frac{1}{(i\sigma_k^{p-1})^\ell} D_x^{p-1-\ell} \circ w_k^{((\alpha+\ell)\beta)} (x,D). \nonumber
\end{eqnarray}
By using Lemma \ref{lemma_derivatives_of_v_k^alphabeta}, it follows that
\begin{multline} \label{estimate_first_term}
\left\| i \japx^{-\sigma} \sum_{\gamma=1}^{p-1} \binom{p-1}{\gamma}  D_x^\gamma w_k^{(\alpha\beta)}(x,D) D_x^{p-1-\gamma} u_k \right\|_{L^2(\R)}  \\
 =  \left\| i \japx^{-\sigma} \sum_{\ell=1}^{p-1} (-1)^{\ell+1} \binom{p-1}{\ell} \frac{1}{(i\sigma_k^{p-1})^\ell} D_x^{p-1-\ell} \circ w_k^{((\alpha+\ell)\beta)} (x,D) u_k \right\|_{L^2(\R)}  \\
 \leq  \tilde{C} \sum_{\ell=1}^{p-1} \sigma_k^{-\sigma - \ell(p-1)} \left\lbrace C \sigma_k^{p-1-\ell} \| v_k^{((\alpha+\ell)\beta))} \|_{L^2(\R)} + C^{\alpha+\ell+\beta+N_k+1} (\alpha!\beta!)^{\theta_h} N_k!^{2\theta_h-1} \sigma_k^{p-1-\ell-N_k} \right\rbrace  \\ 
  \leq  C \sum_{\ell=1}^{p-1} \sigma_k^{-(\sigma+p(\ell-1)+1)} \| v_k^{((\alpha+\ell)\beta)} \|_{L^2(\R)} + C^{\alpha+\beta+N_k+1} (\alpha!\beta!)^{\theta_h} N_k!^{2\theta_h-1} \sigma_k^{p-1-\sigma-N_k}. 
\end{multline}

From \eqref{estimate_r_k(x,D)u_k}, \eqref{estimate_sum_in_the_middle} and \eqref{estimate_first_term}, we get
\begin{eqnarray}\label{estimate_bracket_ijapx_D_x^p-1}
\left\| [ i \japx^{-\sigma} D_x^{p-1}, w_k^{(\alpha\beta)}] u_k \right\|_{L^2(\R)} & \leq & \left\| i \japx^{-\sigma} \sum_{\gamma=1}^{p-1} \binom{p-1}{\gamma} D_x^\gamma w_k^{(\alpha\beta)}(x,D) D_x^{p-1-\gamma} u_k \right\|_{L^2(\R)} \nonumber \\
& + & \left\| i \sum_{\gamma=1}^{N_k-1} \frac{1}{\gamma!} D_x^\gamma \japx^{-\sigma} \partialxi^\gamma w_k^{(\alpha\beta)}(x,D) D_x^{p-1}u_k \right\|_{L^2(\R)} \nonumber \\
& + & \| r_k^{(\alpha\beta)} (x,D) u_k \|_{L^2(\R)} \nonumber \\
& \leq & C \sum_{\ell=0}^{p-1} \sum_{\gamma=1}^{N_k-1} C^\gamma \sigma_k^{p-1-(p-1)\sigma-p(\gamma+\ell)} \| v_k^{((\alpha+\ell)(\beta+\gamma))} \|_{L^2(\R)} \nonumber \\
& + & C \sum_{\ell=1}^{p-1} \sigma_k^{p(1-\ell) - (\sigma+1)} \| v_k^{((\alpha+\ell)\beta)} \|_{L^2(\R)} \nonumber \\
& + & C^{\alpha+\beta+N_k+1} (\alpha!\beta!)^{\theta_h} N_k!^{2\theta_h-1} \sigma_k^{p-1-N_k}.
\end{eqnarray}

\end{itemize}

After all the previous computations, we obtain that if the Cauchy problem \eqref{cauchy_problem_model_operator} is well-posed in $\mathcal{S}^\theta_s(\R)$, then
\begin{eqnarray}
\| f_k^{(\alpha\beta)} \|_{L^2(\R)} & \leq & C \sum_{\ell=1}^p \frac{1}{\sigma_k^{p(\ell-1)}} \| v_k^{((\alpha+\ell)\beta)} \|_{L^2(\R)} + C \sum_{\ell=0}^{p-1} \sum_{\gamma=1}^{N_k-1} C^\gamma \sigma_k^{ p-1-(p-1)\sigma-p(\ell+\gamma) } \| v_k^{((\alpha+\ell)(\beta+\gamma))} \|_{L^2(\R)} \nonumber \\
&& + C^{\alpha+\beta+N_k+1} (\alpha!\beta!)^{\theta_h} N_k!^{2\theta_h-1} \sigma_k^{p-1-N_k}, \label{lemma_estimate_lemma_f_k_above}
\end{eqnarray}
for some positive constant $C$ which does not depend on $k,\alpha,\beta$ and $N_k$.

We are now able to conclude the proof of Proposition \ref{proposition_estimates_E_k_below}.

\begin{proof}[Proof of Proposition \ref{proposition_estimates_E_k_below}]
Inequality \eqref{norm_partial_norm_1} can be combined with estimates \eqref{lemma_japx_D_x^p-1_below} and \eqref{lemma_estimate_lemma_f_k_above} in order to obtain that
\begin{eqnarray}
\partial_t \| v_k^{(\alpha\beta)} \|_{L^2(\R)} & \geq & -\| f_k^{(\alpha\beta)} \|_{L^2(\R)} + \mathbf{Re} \left( \japx^{-\sigma} D_x^{p-1} v_k^{(\alpha\beta)},v_k^{(\alpha\beta)} \right)_{L^2(\R)} \| v_k^{(\alpha\beta)} \|_{L^2(\R)}^{-1} \nonumber \\
& \geq & c_1 \sigma_k^{(p-1)(1-\sigma)} \| v_k^{(\alpha\beta)} \|_{L^2(\R)} - C \sum_{\ell=1}^p \frac{1}{\sigma_k^{p(\ell-1)}} \| v_k^{((\alpha+\ell)\beta)} \|_{L^2(\R)} \nonumber \\
& - & C \sum_{\ell=0}^{p-1} \sum_{\gamma=1}^{N_k-1} C^\gamma \sigma_k^{ p-1-(p-1)\sigma-p(\ell+\gamma) } \| v_k^{((\alpha+\ell)(\beta+\gamma))} \|_{L^2(\R)} \nonumber \\
& - & C^{\alpha+\beta+N_k+1} (\alpha!\beta!)^{\theta_h} N_k!^{2\theta_h-1} \sigma_k^{p-1-N_k}. \nonumber
\end{eqnarray}
Then, by definition of $E_k(t)$, it follows that
\begin{eqnarray}
\partial_t E_k(t) & = & \sum_{\alpha\leq N_k,\beta\leq N_k} \frac{1}{(\alpha!\beta!)^{\theta_1}} \partial_t \| v_k^{(\alpha\beta)}(t,\cdot) \|_{L^2(\R)} \nonumber \\
& \geq & \sum_{\alpha\leq N_k,\beta\leq N_k} \frac{1}{(\alpha!\beta!)^{\theta_1}} c_1 \sigma_k^{(p-1)(1-\sigma)} \| v_k^{(\alpha\beta)} \|_{L^2(\R)} \nonumber \\
& - & C \sum_{\ell=1}^p \frac{1}{\sigma_k^{p(\ell-1)}} \sum_{\alpha\leq N_k,\beta\leq N_k} \frac{1}{(\alpha!\beta!)^{\theta_1}} \| v_k^{((\alpha+\ell)\beta)} \|_{L^2(\R)} \nonumber \\
& - & C \sum_{\ell=0}^{p-1} \sum_{\gamma=1}^{N_k-1} C^\gamma \sigma_k^{ p-1-(p-1)\sigma-p(\ell+\gamma) } \sum_{\alpha\leq N_k,\beta\leq N_k} \frac{1}{(\alpha!\beta!)^{\theta_1}} \| v_k^{((\alpha+\ell)(\beta+\gamma))} \|_{L^2(\R)} \nonumber \\
& - & \sum_{\alpha\leq N_k,\beta\leq N_k} \frac{1}{(\alpha!\beta!)^{\theta_1}} C^{\alpha+\beta+N_k+1} (\alpha!\beta!)^{\theta_h} N_k!^{2\theta_h-1} \sigma_k^{p-1-N_k}. \nonumber
\end{eqnarray}
Now our task is to treat all the terms in the right-hand side of the above inequality. Starting with the first one, we simply have
\begin{equation}\label{identity_first_term}
\sum_{\alpha\leq N_k,\beta\leq N_k} \frac{1}{(\alpha!\beta!)^{\theta_1}} c_1\sigma_k^{(p-1)(1-\sigma)} \| v_k^{(\alpha\beta)} \|_{L^2(\R)} = c_1\sigma_k^{(p-1)(1-\sigma)} E_k(t).
\end{equation}

To deal with the second term, just recall that $E_{k,\alpha+\ell,\beta}(t)=(\alpha+\ell)!^{-\theta_1}\beta!^{-\theta_1} \| v_k^{((\alpha+\ell)\beta)}(t) \|_{L^2(\R)}$. Hence,
\begin{eqnarray}
\sum_{\ell=1}^p \frac{C}{\sigma_k^{p(\ell-1)}} \sum_{\alpha\leq N_k,\beta\leq N_k} \frac{1}{(\alpha!\beta!)^{\theta_1}} \| v_k^{((\alpha+\ell)\beta)} \|_{L^2(\R)} & = & \sum_{\ell=1}^p \frac{C}{\sigma_k^{p(\ell-1)}} \sum_{\alpha\leq N_k, \beta\leq N_k} \frac{(\alpha+\ell)!^{\theta_1}}{\alpha!^{\theta_1}} E_{k,\alpha+\ell,\beta} \nonumber \\
& \leq & \sum_{\ell=1}^p \frac{C N_k^{\ell\theta_1}}{\sigma_k^{p(\ell-1)}} \left( E_k + \sum_{\alpha=N_k-\ell+1}^{N_k} E_{k,\alpha+\ell,\beta} \right). \nonumber
\end{eqnarray}
From \eqref{E_kalphabeta_estimate} in Lemma \ref{proposition_estimates_E_k_above} it holds that
$$
E_{k,\alpha+\ell,\beta} \leq C^{\alpha+\beta+\ell+1} ((\alpha+\ell)!\beta!)^{\theta_h-\theta_1},
$$
and, since $\alpha+\ell\geq N_k$ and $\theta_h-\theta_1\leq0$, we obtain that
$$
\sum_{\ell=1}^p \frac{C}{\sigma_k^{p(\ell-1)}} \sum_{\alpha\leq N_k,\beta\leq N_k} \frac{1}{(\alpha!\beta!)^{\theta_1}} \| v_k^{((\alpha+\ell)\beta)} \|_{L^2(\R)} \leq \sum_{\ell=1}^p \frac{C N_k^{\ell\theta_1}}{\sigma_k^{p(\ell-1)}} \left( E_k + C^{N_k+1} N_k!^{\theta_h-\theta_1} \right).
$$
The definition $N_k = \lfloor \sigma_k^\frac{\lambda}{\theta_1} \rfloor$ and the inequality $N_k^{N_k} \leq e^{N_k} N_k!$ allow us to estimate
\begin{eqnarray}\label{inequality_second_term}
& & \sum_{\ell=1}^p \frac{C}{\sigma_k^{p(\ell-1)}} \sum_{\alpha\leq N_k,\beta\leq N_k} \frac{1}{(\alpha!\beta!)^{\theta_1}} \| v_k^{((\alpha+\ell)\beta)} \|_{L^2(\R)} \nonumber \\
& \leq & C \sum_{\ell=1}^p \frac{(\sigma_k^\frac{\lambda}{\theta_1})^{\ell\theta_1}}{\sigma_k^{p(\ell-1)}} \left( E_k + C^{N_k+1} e^{N_k(\theta_h-\theta_1)} \sigma_k^{\frac{\lambda}{\theta_1}(\theta_h-\theta_1)N_k} \right) \nonumber \\
& \leq & C \sum_{\ell=1}^p \frac{\sigma_k^{\lambda\ell}}{\sigma_k^{p(\ell-1)}} E_k + C^{N_k+1} \sigma_k^{C - cN_k},
\end{eqnarray}
where, from now on, $c$ is a positive constant independent of $k$.

In the third term, we employ the identity $ \| v_k^{((\alpha+\ell)(\beta+\gamma))} \|_{L^2(\R)} = ( (\alpha+\ell)!(\beta+\gamma)! )^{\theta_1} E_{k,\alpha+\ell,\beta+\gamma}$, the estimate
$$
\frac{(\beta+\gamma)!}{\beta!} \leq (\beta+\gamma)^\gamma \leq (r N_k)^\gamma \leq r^\gamma (\sigma_k^\frac{\lambda}{\theta_1})^\gamma, \quad \text{provided that}\ \beta+\gamma \leq rN_k, \ r\in\mathbb{N},
$$
and the fact that, if $\lambda \in (0,1)$, then for $k$ sufficiently large it holds $C\sigma_k^{\lambda-1}<1$. Notice that
\begin{eqnarray}
& & \sum_{\ell=0}^{p-1} \sum_{\gamma=1}^{N_k-1} C^\gamma \sigma_k^{ p-1-(p-1)\sigma-p(\ell+\gamma) } \sum_{\alpha\leq N_k,\beta\leq N_k} \frac{1}{(\alpha!\beta!)^{\theta_h}} \| v_k^{((\alpha+\ell)(\beta+\gamma))} \|_{L^2(\R)} \nonumber \\
& = & \sum_{\ell=0}^{p-1} \sum_{\gamma=1}^{N_k-1} C^\gamma \sigma_k^{ p-1-(p-1)\sigma-p(\ell+\gamma) } \sum_{\alpha\leq N_k,\beta\leq N_k} \frac{((\alpha+\ell)!(\beta+\gamma)!)^{\theta_1}}{(\alpha!\beta!)^{\theta_1}} E_{k,\alpha+\ell,\beta+\gamma} \nonumber \\
& \leq & \sum_{\ell=0}^{p-1} \sum_{\gamma=1}^{N_k-1} \sum_{\alpha\leq N_k,\beta\leq N_k} \left\lbrace \sum_{\stackrel{\alpha\leq N_k-\ell}{\beta\leq N_k-\gamma}} + \sum_{\stackrel{\alpha\leq N_k,\beta\leq N_k}{\alpha+\ell>N_k \ \text{or} \ \beta+\gamma>N_k}} \right\rbrace (C\sigma_k^{\lambda-1})^\gamma \sigma_k^{ p-1-(p-1)\sigma-p(\ell+\gamma) } E_{k,\alpha+\ell,\beta+\gamma} \nonumber \\
& \leq & E_k + \sum_{\ell=0}^{p-1} \sum_{\gamma=1}^{N_k-1} \sum_{\stackrel{\alpha\leq N_k,\beta\leq N_k}{\alpha+\ell>N_k \ \text{or} \ \beta+\gamma>N_k}} C^{\alpha+\ell+\beta+\gamma+1} ( ( \alpha+\ell )! ( \beta+\gamma )! )^{\theta_h-\theta_1} \nonumber \\
& \leq & E_k + C^{N_k+1} N_k!^{\theta_h-\theta_1}. \nonumber
\end{eqnarray}
Since $N_k=\lfloor\sigma_k^\frac{\lambda}{\theta_1}\rfloor$, the above inequality turns into
\begin{equation}\label{inequality_third_term}
\sum_{\ell=0}^{p-1} \sum_{\gamma=1}^{N_k-1} C^\gamma \sigma_k^{ p-1-(p-1)\sigma-p(\ell+\gamma) } \sum_{\alpha\leq N_k,\beta\leq N_k} \frac{1}{(\alpha!\beta!)^{\theta_h}} \| v_k^{((\alpha+\ell)(\beta+\gamma))} \|_{L^2(\R)} \leq E_k + C^{N_k+1} \sigma_k^{C-cN_k},
\end{equation}
for all $k$ sufficiently large, where $C$ and $c$ are positive constants which do not depend on $k$.

For the fourth and last term, by using definition of $N_k=\lfloor \sigma_k^\frac{\lambda}{\theta_1} \rfloor$ and recalling that $\theta_h<\theta_1$, it can be concluded that
\begin{equation}\label{inequality_fourth_term}
\sum_{\alpha\leq N_k,\beta\leq N_k} \frac{1}{(\alpha!\beta!)^{\theta_1}} C^{\alpha+\beta+N_k+1} (\alpha!\beta!)^{\theta_h} N_k!^{2\theta_h-1} \sigma_k^{p-1-N_k} \leq C^{N_k+1} \sigma_k^{C-cN_k}.
\end{equation}

Therefore, gathering \eqref{identity_first_term}, \eqref{inequality_second_term}, \eqref{inequality_third_term} and \eqref{inequality_fourth_term} the proof is concluded.
\end{proof}

\begin{remark}\label{criticalcase}
In this paper we prove that the Cauchy problem \eqref{cauchy_problem_gelfand-shilov} is well-posed in $\mathcal{S}_s^\theta(\R)$ if $(p-1)\theta < \min\{\frac1{1-\sigma},s\}$ and that it is not well-posed in general if $(p-1)\theta > \min\{\frac1{1-\sigma},s\}$. The critical case $(p-1)\theta = \min\{\frac1{1-\sigma},s\}$ unfortunately remains unexplored. We first remark that in this case, the arguments of the proofs of the results of Section \ref{section_ill-posedness} fail. As a matter of fact, if $s \leq \frac1{1-\sigma}$ and $(p-1)\theta=s$, the proof of Proposition \ref{cauchy_problem_similar_schrodinger} fails since it is based on the application of Theorem 1.2 in \cite{AW24} which is not valid for $(p-1)\theta=s$. On the other hand, if $s>\frac1{1-\sigma}$ and $(p-1)(1-\sigma)=\frac1{\theta},$ we are not able to conclude from \eqref{estimate_E_k(T*)_from_below_2} that $E_{k}(T^*) \to \infty$ for all $T^* \in (0,T]$ and to prove Proposition \ref{proposition_main_result_4}. 
Concerning the validity of Theorem \ref{theorem_main_result_3} in the case $(p-1)\theta = \min \left\lbrace \frac{1}{1-\sigma}, s \right\rbrace$, it is still possible to perform the conjugation made in Section \ref{section_conjugation} but we cannot apply Theorem \ref{gevreythm}, that is Theorem 1.1 in \cite{AACM25}, since this does not cover the case $(p-1)\theta =\frac1{1-\sigma}.$ This case has been treated in \cite{ Arias_GS, KB} in the case $p=2$ but for $p \geq 3$ there are no results for the critical threshold. With a minor revision of the proof of Theorem \ref{gevreythm} it should be possible to obtain well-posedness at least in a time interval $[0,T^*]$ for some suitable $T^* <T$ but it is not clear to the authors if it is possible to extend the solution on the whole interval $[0,T]$ by a bootstrap argument. Since a revisitation of the proof of Theorem \ref{gevreythm} would require a separate analysis in Gevrey spaces, in this paper we prefer to leave this question open for a future specific investigation.  
	
\end{remark}

\begin{remark}
	Theorem \ref{theorem_main_result_3} concerns $p$-evolution equations in one space dimension. The reason is that the proof is based on the application of \cite[Theorem 1.1]{AACM25} which applies to $p$-evolution equations in one space dimension in Gevrey spaces. An extension of the latter result in higher space dimension would easily give a similar result in the Gelfand-Shilov setting as it has been obtained in \cite{Arias_GS}. At this moment the only well-posedness results in Gevrey spaces for $x \in \R^n, n \geq 2,$ concern the case $p=2$, cf. \cite{CRJEECT, KB}. In these papers, the Cauchy problem with data in Gevrey spaces is reduced to an auxiliary Cauchy problem with data in Sobolev spaces using a suitable change of variable given by a pseudodifferential operator of infinite order with symbol defined in terms of functions satisfying suitable differential inequalities. An analogous of this change of variable for $p$-evolution equations with $p \geq 3$ is still unknown and represents a challenging problem.
\end{remark}

\appendix
\begin{section}{Pseudodifferential operators}
	In this Appendix we collect some definitions and classical results about pseudodifferential operators that we use in the proof of the lemmas in Section \ref{section_ill-posedness}.
	
	\begin{definition}
			For any $m \in \R$, the set $S_{0,0}^m(\R)$ is defined as the space of all functions $p \in C^\infty(\R^2)$ satisfying the following conditions: for all $\alpha,\beta \in \mathbb{N}_0$, there exists a constant $C_{\alpha,\beta}>0$ such that
			$$
			| \doublepartial p(x,\xi) | \leq C_{\alpha,\beta} \japxi^m.
			$$
			The topology in $S_{0,0}^m(\R)$ is induced by the family of semi-norms
			$$
			| p |_\ell^{(m)} := \max_{\alpha\leq\ell,\beta\leq\ell} \sup_{x,\xi\in\R} | \doublepartial p(x,\xi) | \japxi^{-m}, \quad p \in S_{0,0}^m(\R), \ \ell \in \mathbb{N}_0.
			$$
	\end{definition}
 		
	\begin{proposition}[Theorem 1.6 in \cite{kumano}] \label{CVthm}
		If $p \in S_{0,0}^m(\R)$, then for all $s \in \R$ there exist $\ell:=\ell(s,m) \in \mathbb{N}_0$ and $C:=C_{s,m}>0$ such that
		$$
		\| p(x,D)u \|_{H^s(\R)} \leq C | p |_\ell^{(m)} \| u \|_{H^{s+m}(\R)}, \quad \forall u \in H^{s+m}(\R).
		$$
		Besides, when $m=s=0$, $| p |_\ell^{(m)}$ can be replaced by
		$$
		\max_{\alpha,\beta\leq2} \sup_{x,\xi\in\R} | \doublepartial p(x,\xi) |.
		$$
	\end{proposition}

	Now we consider the algebra properties of $S_{0,0}^m(\R)$ with respect to the composition of operators. Let $p_j \in S_{0,0}^{m_j}(\R)$, $j=1,2$, and define
	\begin{eqnarray}\label{symbol_composition_S_0,0^m}
		q(x,\xi) & = & Os - \iint e^{-iy\eta} p_1(x,\xi+\eta) p_2(x+y,\xi) dy \dslash\eta \nonumber \\
		& = & \lim_{\epsilon\to0} \iint e^{-iy\eta} p_1(x,\xi+\eta) p_2(x+y,\xi) e^{-\epsilon^2 y^2} e^{-\epsilon^2 \eta^2} dy \dslash\eta.
	\end{eqnarray} We have the following result concerning the composition, whose proof can be checked in \cite{kumano}, Lemma 2.4 and Theorem 1.4.

	\begin{proposition}\label{compositionthm}
		Let $p_j \in S_{0,0}^{m_j}(\R)$, $j=1,2$, and $q$ defined by \eqref{symbol_composition_S_0,0^m}. Then $q \in S_{0,0}^{m_1+m_2}(\R)$ and $q(x,D)=p_1(x,D)p_2(x,D)$. Moreover,
		\begin{equation}\label{asymptotic_q_composition}
			q(x,\xi) = \sum_{\alpha<N} \frac{1}{\alpha!} \partialxi^\alpha p_1(x,\xi) D_x^\alpha p_2(x,\xi) + r_N(x,\xi),
		\end{equation}
		where
		$$
		r_N(x,\xi) = N \int_0^1 \frac{(1-\vartheta)^{N-1}}{N!} Os-\iint e^{-iy\eta}\partialxi^N p_1(x,\xi+\vartheta\eta) D_x^N p_2(x+y,\xi) dy \dslash\eta d\vartheta,
		$$
		and the semi-norms of $r_N$ can be estimated in the following way: for any $\ell_0 \in \mathbb{N}_0$, there exists $\ell_1=\ell_1(\ell_0)$ such that
		$$
		| r_N |_{\ell_0}^{(m_1+m_2)} \leq C_{\ell_0} | \partialxi^N p_1 |_{\ell_1}^{(m_1)} | \partialx^N p_2 |_{\ell_1}^{(m_2)}.
		$$
	\end{proposition}
In the paper we shall also deal with symbols of $\textrm{\textbf{SG}}$ type. We recall here the definition.

\begin{definition}\label{SGsymbols}
	Fixed $m_1,m_2 \in \R$, we denote by $\mathbf{SG}^{m_1,m_2} (\R^2)$ the space of all functions $p \in C^\infty(\R^2)$ satisfying for all $\alpha,\beta \in \mathbb{N}_0$, the following estimate
	$$
	| \doublepartial p(x,\xi) |\japxi^{-m_1+\alpha} \japx^{-m_2+\beta} <\infty.
	$$
\end{definition}
	
	The next result is a variant of sharp G{\aa}rding inequality for $\mathbf{SG}$ symbols which follows directly from Theorem 4 and Proposition 6 in \cite{ACsharpGarding}.

	\begin{proposition}\label{theorem_sharp_garding}
		Let $p \in \mathbf{SG}^{m_1,m_2}(\R^{2n})$ such that $\mathbf{Re} \ p(x,\xi) \geq 0$. Then there exist $q \in \mathbf{SG}^{m_1,m_2}(\R^{2n})$ and $r \in \mathbf{SG}^{m_1-1,m_2-1}(\R^{2n})$ such that
		$$
		p(x,D) = q(x,D) + r(x,D) 	$$
		and
		$$
		\left( q(x,D)v, v \right)_{L^2(\R^{n})} \geq 0, \quad \forall v \in \mathcal{S}(\R^{n}).
		$$
	\end{proposition}

	\end{section}

\textbf{Acknowledgements.} The authors wish to thank the referees for their precious suggestions which allowed them to improve the quality of the paper.



\end{document}